\documentclass{article}

\usepackage{amsfonts,amsmath,amssymb,amsthm}
\usepackage{amscd}
\usepackage[dvipsnames,table,xcdraw]{xcolor}
\usepackage[colorlinks = true,
            linkcolor = Fuchsia,
            urlcolor  = ForestGreen,
            citecolor = WildStrawberry,
            anchorcolor = blue]{hyperref}

\usepackage[all,2cell]{xy} \UseAllTwocells
\usepackage{tikz}
\usetikzlibrary{cd}
\usetikzlibrary{arrows,automata}
\usetikzlibrary{decorations.markings}
\usetikzlibrary{decorations.pathmorphing,shapes}

\usepackage{enumitem}

\usepackage[english]{babel}

\usepackage[letterpaper,top=2cm,bottom=2cm,left=3cm,right=3cm,marginparwidth=1.75cm]{geometry}

\usepackage[style=numeric, backend=bibtex,url=false,maxnames=10]{biblatex}
\usepackage{csquotes}
\makeatletter
\AtEveryBibitem{%
  \ifnameundef{author}{}{%
    \iffieldequals{fullhash}{\bbx@lasthash}
      {\renewcommand{\printnames}[1]{\bibnamedash}}
      {\savefield{fullhash}{\bbx@lasthash}}%
  }%
}
\makeatother

\usepackage{amsmath}
\usepackage{graphicx}

\def\lra{\longrightarrow}
\def\Ext{\mathsf{Ext}}
\def\mf{\mathfrak}

\def\Q{\mathbb Q}
\def\Z{\mathbb Z}

\newcommand{\id}{\mathsf{id}}

\newcommand{\Coh}{\mathsf{Coh}}

\newcommand{\one}{\mathbf{1}}
\newcommand{\kk}{\mathbf{k}}

\newcommand{\mcO}{\mathcal{O}}
\newcommand{\mcL}{\mathcal{L}}

\newcommand{\mcS}{\mathcal{S}}
\newcommand{\mcF}{\mathcal{F}}
\newcommand{\mcA}{\mathcal{A}}
\newcommand{\mcG}{\mathcal{G}}

\newcommand{\mbs}{\mathbf{s}}
\newcommand{\wtR}{\widetilde{R}}
\newcommand{\wtK}{\widetilde{K}}
\newcommand{\PP}{\mathbb{P}}
\newcommand{\UzeroA}{\mathcal{U}^0_{\mcA}}

\DeclareMathOperator{\ch}{ch}

\newcommand{\bbinom}[2]{
  \begin{bmatrix}
    #1 \\
    #2
  \end{bmatrix}
}

\newcommand{\bmat}[3]{
  \begin{bmatrix}
    #1 ; #2 \\
    #3
  \end{bmatrix}
}

\newcommand{\oplusop}[1]{{\mathop{\oplus}\limits_{#1}}}

\theoremstyle{definition}
\newtheorem{thm}{Theorem}[section]

\newtheorem{remark}[thm]{Remark}
\newtheorem{cor}[thm]{Corollary}
\newtheorem{prop}[thm]{Proposition}
\newtheorem{example}[thm]{Example}

\theoremstyle{definition}

\title{Coherent sheaves on projective spaces and a lifting of the integral form of the Cartan subalgebra for quantum $\mf{sl}(2)$}
\author{Mikhail Khovanov}
\date{August 10, 2026}

\begin{document}
\maketitle

\begin{abstract}
We categorify a lifting of the integral form of the Cartan subalgebra for quantum $\mathfrak{sl}(2)$, via categories of equivariant coherent sheaves on projective spaces.
\end{abstract}

\tableofcontents

\section{Introduction}

The main result of the present paper identifies a graded algebra lifting Lusztig's integral Cartan algebra for quantum $\mathfrak{sl}(2)$ with the Grothendieck ring of a symmetric monoidal category of $\mathbb{G}_m$-equivariant coherent sheaves on projective spaces. Under this identification, the quantum-binomial multiplication formula of Guti\'errez-Mart\'inez-Szwej-Wildon~\cite{GMSW1} is lifted to  explicit direct sum decompositions of equivariant vector bundles.

\vspace{0.07in}

Consider a commutative $\Z[q,q^{-1}]$-algebra $R$ with generators $K_{c,n}$ over all $n\ge 0$ and $c\in \Z$ modulo defining relations \eqref{eq_q_binom} and \eqref{eq_multiplication},  see Section~\ref{sec_integral}. It is graded, with $\deg K_{c,n}=n$ , and $R=\oplus_{n\ge 0}R_n$,  the decomposition into homogeneous summands. Algebra $R$ surjects onto the integral form $\UzeroA=\UzeroA(\mf{sl}_2)$ of the Cartan subalgebra of quantum $\mf{sl}_2$ introduced in~\cite{Lusztig2,Lusztig1}, see also~\cite{MartinezRuiz2025,GMSW1}.  The surjective ring homomorphism
\begin{equation}\label{eq_hom_surj}
\psi:R\lra \UzeroA
\end{equation}
is given by
\[
K_{c,1}\longmapsto [K;c] = \frac{q^c K - q^{-c} K^{-1}}{q-q^{-1}},
\]
and, more generally,
\begin{equation}\label{eq_Kcn_int}
K_{c,n} \longmapsto
\bmat{K}{c}{n} := \frac{[K;c][K;c-1]\dots [K;c-n+1]}{[n]!}.
\end{equation}
Our main theorem is a realization of $R$ as the sum of $\mathbb{G}_m$-equivariant Grothendieck groups of $\PP^n$, over all $n$:
\begin{thm}\label{thm_isom}
    There is a natural isomorphism of graded $\Z[q,q^{-1}]$-algebras
    \begin{equation}\label{eq_thm_isom}
    R\ \cong \oplusop{n\ge 0}\, K_0^{\mathbb{G}_m}(\PP^n)
    \end{equation}
    that takes $K_{c,n}$ to the symbol of the equivariant line bundle $\mcO(-c)$ on $\PP^n$.
\end{thm}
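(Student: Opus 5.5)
The plan is to construct the map degree by degree, check that it respects both families of defining relations, and then prove bijectivity one graded piece at a time by comparing bases.

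\textbf{The target.} First I would describe each summand $K_0^{\mathbb{G}_m}(\PP^n)$ explicitly. Write $\PP^n=\PP(V)$ with $V$ an $(n+1)$-dimensional $\mathbb{G}_m$-representation, with weights chosen so that $q$ acts as the class of the weight-one character. The equivariant version of Beilinson's resolution of the diagonal (equivalently, the equivariant projective bundle formula over a point) shows two things. First, $K_0^{\mathbb{G}_m}(\PP^n)$ is a free $\Z[q,q^{-1}]$-module with basis $[\mcO(-c)]$ for $0\le c\le n$. Second, all linear relations among the symbols $[\mcO(-c)]$, $c\in\Z$, follow from the Euler characteristics of the twisted Koszul complexes
\[
0\to \Lambda^{n+1}V\otimes\mcO(-c-n-1)\to\cdots\to V\otimes \mcO(-c-1)\to \mcO(-c)\to 0 .
\]
Taking the $\mathbb{G}_m$-character of $\Lambda^iV$ produces exactly $q$-binomial coefficients. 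So the Koszul relation should match the linear relation \eqref{eq_q_binom} among the $K_{c,n}$ with $n$ fixed, up to the normalization of weights, which I would fix by comparing with $\psi$.

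\textbf{Defining the map.} Define $\Phi(K_{c,n})=[\mcO(-c)]\in K_0^{\mathbb{G}_m}(\PP^n)$ on generators. There are two relations to check.
\begin{enumerate}
\item The linear relation \eqref{eq_q_binom} holds by the Koszul computation above.
\item The multiplication relation \eqref{eq_multiplication} needs the symmetric monoidal structure on $\bigoplus_n \Coh^{\mathbb{G}_m}(\PP^n)$, which the paper sets up before the theorem. Concretely, the product of $\mcO(-a)$ on $\PP^m$ and $\mcO(-b)$ on $\PP^{n}$ must decompose equivariantly as a direct sum of twisted line bundles whose multiplicities are given by the GMSW quantum-binomial formula~\cite{GMSW1}.
\end{enumerate}
I would prove the decomposition in (2) by filtering, or splitting, the relevant equivariant bundle: identify its graded pieces via the weight decomposition of $V\oplus W$, then count weights to recover the $q$-binomial coefficients. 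This gives a well-defined homomorphism $\Phi$ of graded $\Z[q,q^{-1}]$-algebras.

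\textbf{Bijectivity.} It suffices to work on each graded piece $R_n$. Surjectivity onto $K_0^{\mathbb{G}_m}(\PP^n)$ is immediate, since the basis elements $[\mcO(-c)]$, $0\le c\le n$, are images of generators. For injectivity, I would show that $R_n$ is spanned over $\Z[q,q^{-1}]$ by $K_{0,n},\dots,K_{n,n}$. There are two inputs.
\begin{itemize}
\item The products of lower-degree generators landing in $R_n$ can be rewritten via \eqref{eq_multiplication} as combinations of degree-$n$ generators $K_{c,n}$.
\item The linear relation \eqref{eq_q_binom} expresses $K_{c+n+1,n}$ in terms of $K_{c,n},\dots,K_{c+n,n}$. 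Its extreme coefficients $1$ and $\pm q^{\text{power}}$ are units, so it can also be solved for $K_{c,n}$. Induction on $c$ upward and downward then reduces every $K_{c,n}$ to the window $0\le c\le n$.
\end{itemize}
Since these $n+1$ elements map to a basis, they are linearly independent, and $\Phi$ is an isomorphism on each $R_n$.

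\textbf{Main obstacle.} The hard step is (2) in the second stage: realizing \eqref{eq_multiplication} as an honest equivariant direct sum decomposition, with the powers of $q$ matching. The Grothendieck-group identity alone would suffice for the theorem, but getting the weights exactly right is where errors are likely. My first attempt would be to reduce it, by the projection formula, to a character identity for $\Lambda^\bullet$ and $S^\bullet$ of $V\oplus W$. I would then check that identity against $\psi$ composed into $\UzeroA$, where it becomes the known GMSW formula.
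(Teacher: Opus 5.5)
\textbf{Gap in step (2): the multiplication relation is never actually proved.}

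Your outer architecture matches the paper's. The Koszul complexes give \eqref{eq_q_binom}. (The weights $n,n-2,\dots,-n$ on $H^0(\PP^n,\mcO(1))\cong\mathrm{Sym}^n\kk^2$ are forced by equivariance of $f_{n,m}$; they are not a normalization to fix afterwards via $\psi$, and they give $\ch_q\Lambda^k=\bbinom{n+1}{k}$ exactly.) The spanning set $\{K_{\mbs}\}$ maps onto the basis $[\mcO(-c)]$, $0\le c\le n$, and everything hinges on \eqref{eq_multiplication}.

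The product is $\mathcal{E}=(f_{n,m})_*(\mcO(-c)\boxtimes\mcO(-b))$ for multiplication of binary forms, and you exhibit no filtration of $\mathcal{E}$. Weight counting on $V\oplus W$ can only compute characters of bigraded pieces of $S^\bullet(V^*\oplus W^*)$. Via the projection formula ($f_{n,m}$ finite, $f_{n,m}^*\mcO(1)\cong\mcO(1,1)$ equivariantly), these are the characters of $H^0(\PP^{n+m},\mathcal{E}(t))\cong H^0(\PP^n,\mcO(t-c))\otimes H^0(\PP^m,\mcO(t-b))$. The identity that then appears is the balanced quantum Chu--Vandermonde identity. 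But a character identity for global sections does not by itself give a direct sum decomposition. The paper needs three further inputs:
\begin{enumerate}
\item Horrocks' criterion, applicable because intermediate cohomology of every twist of $\mathcal{E}$ vanishes when $-m\le c-b\le n$, so the underlying bundle splits.
\item $G$-invariance of the Harder--Narasimhan filtration, together with $\Ext^1_{\PP^N}(\mcO(a),\mcO(b))=0$ and linear reductivity of $\mathbb{G}_m$, to make the splitting equivariant.
\item Only then, the twist-by-twist extraction of the multiplicity representations from $\ch_q H^0(\mathcal{E}(t))$, starting at the largest twist.
\end{enumerate}

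\textbf{The K-theory-only route and the $\psi$ check.} If you aim only at the Grothendieck-group relation, as you note would suffice, you can bypass splitting. You then need a lemma you never state: $[\mcF]\in K_G(\PP^N)$ is determined by the equivariant Euler characteristics $\chi_G(\PP^N,\mcF(t))$. This holds because the Euler pairing has unitriangular Gram matrix $\bbinom{N+c-d}{N}$ on $[\mcO(-c)]$, $0\le c,d\le N$. With this lemma, the projection formula, K\"unneth and quantum Chu--Vandermonde give \eqref{eq_multiplication} in $K_G(\PP^{n+m})$.

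Your proposed check against $\psi$ cannot replace this as stated. $\psi$ has nonzero kernel, e.g.\ \eqref{eq_kernel_el}, so agreement of images in $\UzeroA$ proves nothing by itself. It becomes a proof only once you establish that lemma in another form. First, the map $K_G(\PP^N)\to\UzeroA$, $[\mcO(-c)]\mapsto\bmat{K}{c}{N}$, is injective: the coefficient of $K^{N-2j}$ in $\bmat{K}{c}{N}$ is a nonzero element of $\Q(q)$, independent of $c$, times $q^{c(N-2j)}$, so a Vandermonde determinant in the distinct $q^{N-2j}$ appears. Second, this map sends $[\mcF]$ to the Laurent polynomial whose value at $K=q^t$ is $(-1)^N\chi_G(\PP^N,\mcF(-1-t))$, and that is multiplicative under $(f_{n,m})_*$. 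Either K-theoretic route yields the theorem, but not the equivariant direct sum decomposition you set out to prove in step (2).
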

Under this isomorphism, $R_n\cong K_0^{\mathbb{G}_m}(\PP^n)$, and the multiplication
\[
R_n\times R_m\lra R_{n+m}
\]
comes from the map on Grothendieck groups induced by the equivariant pushforward functor $(f_{n,m})_{\ast}$ for the quotient map
\begin{equation}\label{eq_fnm}
f_{n,m}:\PP^n\times \PP^m\lra \PP^{n+m}.
\end{equation}
This functor is exact on equivariant coherent sheaves, and it turns the direct sum of categories of equivariant coherent sheaves
\begin{equation}\label{eq_coh_all_n}
\Coh^{\mathbb{G}_m}(\PP) := \oplusop{n\ge 0}\Coh^{\mathbb{G}_m}(\PP^n)
\end{equation}
into an abelian symmetric monoidal category, with the Grothendieck ring of $\Coh^{\mathbb{G}_m}(\PP)$ given by the sum in the RHS of \eqref{eq_thm_isom}.

 $K_0^{\mathbb{G}_m}(\PP^n)$, the Grothendieck group of $\mathbb{G}_m$-equivariant vector bundles (or coherent sheaves) on $\PP^n$, is a free $\Z[q,q^{-1}]$-module with a basis of symbols $[\mcO(-i)]$, $0\le i\le n$ of line bundles.
 We consider the standard $\mathbb{G}_m$-action lifted to $\mcO(1)$ such that its space of global sections $H^0(\PP^n, \mcO(1))$ has character $[n+1]$. This extends to $\mcO(i)=\mcO(1)^{\otimes i}$, where the space of global sections $H^0(\PP^n, \mcO(i))$ has character given by the quantum binomial coefficient $\left[ \begin{smallmatrix} n+i \\ i \end{smallmatrix} \right]$ for $i\ge 0$.

 Computational part of the theorem is given by Theorem~\ref{thm_pushforward_equiv},  which determines the equivariant pushforward of the external tensor product of line bundles on $\PP^n$ and $\PP^m$, subject to a suitable restriction on their degrees. The proof is based on the Horrocks' Criterion. Relations \eqref{eq_multiplication} lift to a decomposition of this pushforward into a direct sum of line bundles on $\PP^{n+m}$. Relations \eqref{eq_q_binom} correspond to the standard long exact sequences of multiples of line bundles $\mcO(-c-k)$ on $\PP^n$. Section~\ref{sec_equiv_coherent}  is devoted to the proof of Theorem~\ref{thm_isom}.

\vspace{0.07in}

Specializing $q$ to $1$ reduces the ground ring to $\Z$ and $R$ to a ring $\wtR$ defined in Section~\ref{sec_integral} with a spanning set $\{\wtK_{c,n}\}$. It has defining relations \eqref{eq_q_binom} and \eqref{eq_multiplication} but with the quantum binomials replaced by the usual binomials, see also~\eqref{eq_multiplication_cl}.

\begin{thm}\label{thm_isom_noneq}
    There is a natural ring isomorphism
\begin{equation}\label{eq_thm_isom_noneq}
    \wtR\ \cong \oplusop{n\ge 0}\, K_0(\PP^n)
    \end{equation}
    that takes $\wtK_{c,n}$ to the symbol of the line bundle $\mcO_{\PP^n}(-c)$ for all $c\in \Z$, $n\ge 0$.
\end{thm}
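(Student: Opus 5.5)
Since Theorem~\ref{thm_isom} is established first, the plan is to deduce Theorem~\ref{thm_isom_noneq} from it by specializing $q\mapsto 1$, with a check that nothing is lost in the specialization. The $\Z[q,q^{-1}]$-module $\Z$ on which $q$ acts by $1$ is written $\Z_{q=1}$.

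\textbf{Step 1: algebra side.} By construction, $\wtR$ has the same generators as $R$. Its defining relations are those of \eqref{eq_q_binom} and \eqref{eq_multiplication} with every quantum binomial evaluated at $q=1$. Hence
\[
\wtR\cong R\otimes_{\Z[q,q^{-1}]}\Z_{q=1},
\]
because base change of a presentation by generators and relations gives a presentation of the base-changed ring.

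\textbf{Step 2: geometry side.} We need an isomorphism
\[
K_0^{\mathbb{G}_m}(\PP^n)\otimes_{\Z[q,q^{-1}]}\Z_{q=1}\cong K_0(\PP^n)
\]
induced by the forgetful map. The left-hand side is free over $\Z[q,q^{-1}]$ with basis $[\mcO(-i)]$, $0\le i\le n$. The right-hand side is free over $\Z$ with basis $[\mcO(-i)]$, $0\le i\le n$, by Beilinson's resolution of the diagonal, or equivalently by $K_0(\PP^n)\cong\Z[x]/(1-x)^{n+1}$ with $x=[\mcO(-1)]$. The forgetful map sends $q$ to $1$, since the character variable records the $\mathbb{G}_m$-weight of the trivial line bundle. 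It also sends basis element to basis element, so it induces the desired isomorphism of free modules.

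\textbf{Step 3: multiplication.} The forgetful functor commutes with pushforward along the finite morphism $f_{n,m}$. It also commutes with external tensor products. Therefore the base-changed multiplication on $\oplus_n K_0^{\mathbb{G}_m}(\PP^n)$ is the multiplication on $\oplus_n K_0(\PP^n)$ given by $[\mcF]\cdot[\mcG]=[(f_{n,m})_*(\mcF\boxtimes\mcG)]$. Exactness of $(f_{n,m})_*$ in the non-equivariant setting holds because $f_{n,m}$ is finite, being the quotient by the finite group $S_n\times S_m$-type symmetrization. Tensoring the isomorphism of Theorem~\ref{thm_isom} with $\Z_{q=1}$ and composing with Steps 1 and 2 then gives a ring isomorphism. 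This isomorphism sends $\wtK_{c,n}$ to $[\mcO_{\PP^n}(-c)]$.

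\textbf{Main obstacle.} The delicate point is Step 2 together with compatibility of the specialization with the ring structure, in particular that no extra relations appear after setting $q=1$. Freeness on both sides with matching bases handles the first issue, and is the point I would verify carefully. If one prefers to avoid the base-change argument, there is a fallback: redo the proof of Theorem~\ref{thm_isom} directly with $q=1$. That proof uses Theorem~\ref{thm_pushforward_equiv} to obtain the pushforward decomposition, whose non-equivariant shadow gives \eqref{eq_multiplication_cl}. It uses the Koszul-type exact sequences to obtain \eqref{eq_q_binom} at $q=1$. Finally, it uses a spanning argument showing that $\{\wtK_{c,n}\}_{0\le -c\le n}$, or the analogous index range, spans $\wtR_n$, so that $\wtR_n$ has rank at most $n+1$ while mapping onto the free rank $n+1$ group $K_0(\PP^n)$.
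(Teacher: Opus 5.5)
Your argument is correct, but it runs in the opposite direction from the paper. The paper proves Theorem~\ref{thm_isom_noneq} first and directly, as Proposition~\ref{prop_iso_noneq}, using three inputs:
\begin{itemize}
\item the non-equivariant pushforward decomposition of Theorem~\ref{thm_pushforward} (Horrocks' criterion plus the classical Chu--Vandermonde identity), applied for $-n\le i\le 0$, $-m\le j\le 0$;
\item the basis $[\mcO(i)]$, $-n\le i\le 0$, of $K_0(\PP^n)$;
\item the substitution $c=-i$, $b=-j$, which matches \eqref{eq_f_decomposition} with \eqref{eq_multiplication_cl}.
\end{itemize}
Only afterwards does the paper redo everything equivariantly.

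You instead take Theorem~\ref{thm_isom} as input and specialize $q\mapsto 1$. Since $\wtR$ is by definition $\Z\otimes_{\Z[q,q^{-1}]}R$, the real content is that the forgetful map $K_{\mathbb{G}_m}(\PP^n)\otimes_{\Z[q,q^{-1}]}\Z\to K_0(\PP^n)$ is bijective, since it sends basis to basis. It must also be multiplicative, since forgetting equivariance commutes with $\boxtimes$ and $(f_{n,m})_*$. Both points are fine. Your route is also not circular, because the proof of Theorem~\ref{thm_pushforward_equiv} reruns the Horrocks argument itself rather than invoking Proposition~\ref{prop_iso_noneq}.

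What your route buys is economy: the non-equivariant theorem becomes a formal corollary of the equivariant one, with no separate index bookkeeping. What it costs is reliance on the heavier equivariant input: the equivariant projective bundle theorem, and the Harder--Narasimhan plus linear-reductivity argument needed to split $\mathcal{E}$ equivariantly. The paper's direct proof needs only the ordinary Horrocks criterion and ordinary binomial identities, and it yields the bundle-level decomposition \eqref{eq_f_decomposition} in elementary form.

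Two small slips:
\begin{itemize}
\item In your fallback the spanning range should be $0\le c\le n$, not $0\le -c\le n$, because $\wtK_{c,n}\mapsto[\mcO(-c)]$ and the basis is $[\mcO(-i)]$, $0\le i\le n$.
\item $f_{n,m}$ is not itself a quotient by $S_n\times S_m$. It is the induced map $Y^{n+m}/(S_n\times S_m)\to Y^{n+m}/S_{n+m}$. That map is finite, so your exactness conclusion stands.
\end{itemize}
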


This theorem is proved in Section~\ref{subsec_pushforward} (see Proposition~\ref{prop_iso_noneq} there). One replaces $\mathbb{G}_m$-equivariant coherent sheaves on $\PP^n$ by the usual coherent sheaves, without any equivariance condition. In the paper, we first treat the non-equivariant case, in Section~\ref{sec_coherent}, then move onto the $\mathbb{G}_m$-equivariant case, in Section~\ref{sec_equiv_coherent}.

\vspace{0.07in}

 Map $\psi$ in \eqref{eq_hom_surj} has a nontrivial kernel. In particular, unlike $R$, algebra $\UzeroA$ does not have a natural grading by the degree $n$ of generators in \eqref{eq_Kcn_int}. Element
 \begin{equation}\label{eq_kernel_el}
 K_{c+2,n}-(q^n+q^{-n})K_{c+1,n}+K_{c,n}-K_{c,n-2}
 \end{equation}
 is nonzero in $R$ but its image in $\UzeroA$ is $0$, see \cite[Proposition 6.1]{GMSW1}. It is an open question, raised in~\cite{MartinezRuiz2025} and in the  papers~\cite{GMSW1,GMSW2,GMSW3}, whether the algebra $\UzeroA$ or the entire Lusztig integral form $\mathcal{U}_{\mcA}(\mf{sl}_2)$ of quantum $\mf{sl}_2$ admits a categorification (generators and relations for these forms can be found in~\cite{Lusztig2}). In the known categorifications of quantum groups~\cite{Lauda,KhLau,Rouquier1,Rouquier2}  the Cartan subalgebra is absent from the Grothendieck rings, having been replaced by a system of idempotents $\{1_{\lambda}\}$, over all  $\lambda$'s in the integral root lattice~\cite{LusztigBook}. The present paper  contains only a categorification of the larger ring $R$ surjecting onto $\UzeroA$ via the map \eqref{eq_hom_surj}.

\vspace{0.07in}

 {\bf Acknowledgments.}
 Proofs of the main technical results of this paper: Theorem~\ref{thm_pushforward} and Theorem~\ref{thm_pushforward_equiv}  were obtained in close discussions with ChatGPT 5.6 Sol and Gemini 3.1 Pro. The author was partially supported by the Simons Collaboration Award 994328 ``New Structures in Low-Dimensional Topology''.

%
%

\section{Ring \texorpdfstring{$R$}{R} and  the integral Cartan subalgebra of quantum \texorpdfstring{$\mathfrak{sl}(2)$}{sl(2)}}
\label{sec_integral}

{\bf Ring $R$.}
Start with the ring $\Z[q,q^{-1}]$
of Laurent polynomials in $q$ with integer coefficients. Recall the definion of (balanced) quantum integers, factorials and binomials:
\begin{equation}\label{eq_balanced}
[n]! :=[n][n-1]\dots [1], \ \ [n]=\frac{q^n-q^{-n}}{q-q^{-1}}\in \Z[q,q^{-1}], \ \
\bbinom{n}{k}:=\frac{[n]!}{[k]![n-k]!},
\end{equation}
and also set
$\begin{bmatrix} n \\ k \end{bmatrix}=0$ for $k<0$ or $k>n$.

Define $R$ to be a commutative $\Z[q,q^{-1}]$-algebra
with generators $K_{c,n}$ over all $n\ge 0$ and $c\in \Z$ and the following relations \eqref{eq_q_binom} and \eqref{eq_multiplication}:
\begin{equation}\label{eq_q_binom}
    \sum_{k=0}^{n+1} (-1)^k \bbinom{n+1}{k} K_{c+k,n} =0
\end{equation}
for any $n\ge 0$ and any $c\in \Z$;
\begin{equation}\label{eq_multiplication}
K_{c,n}K_{b,m} = \sum_{k \ge 0} \begin{bmatrix} n - c + b \\ k - c \end{bmatrix} \begin{bmatrix} m - b + c \\ k - b \end{bmatrix}
K_{k,n+m},
\end{equation}
for $n,m\ge 0$ and $0\le c\le n$, $0\le b\le m$. The sum above is over $k$ such that $\max(b,c) \le k \le  \min(n+b, m+c)$.
The upper arguments of the $q$-binomials in \eqref{eq_multiplication} are non-negative, due to our restrictions on the indices $c,n,b,m$. We also have $K_{c,0}=1$ for any $c\in \Z$.

\begin{example}\label{example_motivate}
Specializing $n=1,2$ gives us
\begin{eqnarray*}
K_{c+2,1}- [2]K_{c+1,1}+ K_{c,1} & = & 0 \\
K_{c+3,2}-[3]K_{c+2,2}+[3]K_{c+1,2} - K_{c,2} & = & 0 .
\end{eqnarray*}
\end{example}

Relations \eqref{eq_q_binom} and \eqref{eq_multiplication} are homogeneous in $n$. Setting $\deg K_{c,n}=n$ for all $c$ and $\deg(q)=0$, we see that $R$ is an $\Z_+$-graded algebra. Denote the degree $n$ term by $R_n$, so that $R=\oplus_{n\ge 0}R_n$.

Note that
relation \eqref{eq_q_binom} at ``level'' $n$ has $n+2$ terms, and it allows to express $K_{d,n}$ for any $d$ as a $\Z[q,q^{-1}]$-linear combination of $n+1$ consecutive ones $K_{c,n},K_{c+1,n},\dots, K_{c+n,n}$. Consequently, $R$ is a $\Z[q,q^{-1}]$-linear span of elements $K_{c,n}$ over all $0\le c\le n$.

Let $\mcS=\{(c,n)|0\le c\le n\}$.
We will write $K_{\mbs}, \mbs\in\mcS$ to denote generators from this set, so that $R$ is a $\Z[q,q^{-1}]$-span of $K_{\mbs}$, $\mbs\in \mcS$.
In the relation~\eqref{eq_multiplication} the generators are limited to $K_{\mbs}$ for $\mbs\in \mcS$.

\begin{thm}\label{thm_R_free_mod}
    $R$ is a free $\Z[q,q^{-1}]$-module with a basis $\{K_{\mbs}\}_{\mbs\in \mcS}$.
\end{thm}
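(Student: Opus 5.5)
Spanning is already established: relation \eqref{eq_q_binom} expresses every $K_{d,n}$ through $K_{0,n},\dots,K_{n,n}$, and products reduce to these via \eqref{eq_multiplication}. So what remains is linear independence. I will show it by building an explicit commutative $\Z[q,q^{-1}]$-algebra $A$ that is free with basis indexed by $\mcS$. I then define a homomorphism $R\to A$ sending the spanning set $\{K_{\mbs}\}$ bijectively onto that basis. Independence in $A$ then forces independence in $R$.

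\textbf{Construction of $A$.} Let $A=\oplus_{n\ge 0}A_n$, where $A_n$ is the free $\Z[q,q^{-1}]$-module on symbols $e_{c,n}$, $0\le c\le n$. For each $n$, the relation \eqref{eq_q_binom} is a linear recurrence of order $n+1$ with leading and trailing coefficients $\pm1$. Hence each sequence $(K_{d,n})_{d\in\Z}$ is uniquely determined, with coefficients in $\Z[q,q^{-1}]$, by its values at $d=0,\dots,n$. This lets me define $e_{d,n}\in A_n$ for every $d\in\Z$ so that the relations \eqref{eq_q_binom} hold identically. Set $e_{c,0}=1$.

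\textbf{Multiplication on $A$.} Define the product of $e_{c,n}$ and $e_{b,m}$ with $(c,n),(b,m)\in\mcS$ by the right-hand side of \eqref{eq_multiplication}, and extend bilinearly. For $k$ outside $[0,n+m]$, the term $e_{k,n+m}$ is interpreted through the recurrence. In fact the admissible range $\max(b,c)\le k\le \min(n+b,m+c)$ already lies in $[0,n+m]$. Commutativity is visible from the symmetry of the formula under $(c,n)\leftrightarrow(b,m)$, and $e_{0,0}=1$ is a unit because the formula gives $e_{0,0}e_{b,m}=e_{b,m}$.

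Once $A$ is shown to be associative, the universal property of $R$ gives a homomorphism $R\to A$ with $K_{c,n}\mapsto e_{c,n}$. The relations \eqref{eq_q_binom} hold by construction and \eqref{eq_multiplication} holds by definition. Since the $e_{\mbs}$ are free, the $K_{\mbs}$ are free.

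\textbf{Main obstacle: associativity.} Verifying associativity directly would require a double $q$-binomial identity, a $q$-Vandermonde type convolution, which I expect to be messy. I would instead avoid it by realizing $A$ concretely, identifying $e_{c,n}$ with $[\mcO(-c)]\in K_0^{\mathbb{G}_m}(\PP^n)$. The facts I need are these. First, $K_0^{\mathbb{G}_m}(\PP^n)$ is free on $[\mcO(-i)]$ for $0\le i\le n$. Second, the equivariant Koszul complex on $\PP^n$ yields \eqref{eq_q_binom}. Third, the exact functors $(f_{n,m})_*$ give an associative and commutative product, because the quotient maps $f_{n,m}$ are compatible under composition and symmetric. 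With these, associativity comes for free.

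What remains is to check that this product has structure constants \eqref{eq_multiplication}; that is exactly the content of the later pushforward computation (Theorem~\ref{thm_pushforward_equiv}). If I want a purely algebraic proof independent of that computation, my first attempt would be a generating-function argument in $A_n$. I would encode $\sum_d e_{d,n}t^d$ as a rational function with denominator $\prod_{j=0}^{n}(1-q^{n-2j}t)$. I would then check that the product rule corresponds to a Hadamard-type product of such functions, and Hadamard products are manifestly associative.
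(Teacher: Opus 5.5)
Your proof is correct and is essentially the paper's alternative argument. You identify the free module on the $e_{\mbs}$ with $\bigoplus_{n\ge 0}K_{\mathbb{G}_m}(\PP^n)$, take associativity and commutativity from the pushforwards $(f_{n,m})_*$, read the structure constants \eqref{eq_multiplication} off Theorem~\ref{thm_pushforward_equiv}, and conclude that the spanning set $\{K_{\mbs}\}_{\mbs\in\mcS}$ maps to a basis; the intermediate algebra $A$ is optional, since $R$ maps to the $K$-theory ring directly.

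Your tentative algebraic fallback needs one correction. A Hadamard product in the index $d$ only sees the diagonal products $e_{d,n}e_{d,m}$. The map that actually turns the product into a pointwise one works in the twist variable $t$: it is $e_{c,n}\mapsto s^n\cdot\bigl(t\mapsto\bbinom{t-c+n}{n}\bigr)$, the equivariant Euler characteristic of $\mcO(t-c)$. This map is multiplicative by the quantum Chu--Vandermonde identity and injective in each degree, provided you keep the grading variable $s$; without it, elements such as \eqref{eq_kernel_el} map to zero.
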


\begin{proof}
 This theorem can be proved by converting \eqref{eq_multiplication} into its module version: form  a free $\Z[q,q^{-1}]$-module $V$ with a basis $\{v_{\mbs}\}_{\mbs\in \mcS}$ and define an action of operators $\widehat{K}_{c,n}$ on it, for $(c,n)\in \mcS$, by the rule
\begin{equation}\label{eq_multiplication_mod}
\widehat{K}_{c,n}v_{b,m} = \sum_{i \ge 0} \begin{bmatrix} n - c + b \\ i - c \end{bmatrix} \begin{bmatrix} m - b + c \\ i - b \end{bmatrix}
v_{i,n+m}.
\end{equation}
Checking that operators $\widehat{K}_{c,n}$ commute and relation \eqref{eq_multiplication} holds (with $K$'s replaced by $\widehat{K}$'s) is a  tedious computation that involves applying the ${}_4\phi_3$ transformation formula (Sears' identity) for $q$-binomials, or related identities ($q$-Chu-Vandermonde). We omit the details.

Alternatively, the theorem follows from the identification of $R$ via Theorem~\ref{thm_isom}
and an explicit computation of the pushforward of the external tensor product of suitable equivariant line bundles in  Theorem~\ref{thm_pushforward_equiv}.
\end{proof}

Consequently, $R_n$ is a free $\Z[q,q^{-1}]$-module with a basis $(K_{0,n},K_{1,n},\dots, K_{n,n})$.
Here are the products of degree 1 generators:
    \[
    K_{0,1}^2 = K_{0,2} + K_{1,2}, \ \ K_{0,1}K_{1,1} = [2] K_{1,2}, \ \ K_{1,1}^2 = K_{1,2} + K_{2,2}.
    \]
\begin{remark}
    It is sometimes convenient to reindex and define
    \[K(a,b):=K_{a,a+b}.\]
    The set $\mcS$ of indices providing the above basis of $R$ corresponds to pairs $(a,b)$ with $a,b\ge 0$. Relation \eqref{eq_multiplication} becomes
    $$K(a,b)K(a',b') = \sum_{i \ge 0} \begin{bmatrix} a' + b \\ i - a \end{bmatrix} \begin{bmatrix} a + b' \\ i - a' \end{bmatrix} K(i, a + a' + b + b' - i), \ \ $$
    the sum over $i$ with
    $\max(a,a')\le i\le a+a'+\min(b,b')$.
\end{remark}

\vspace{0.07in}

{\bf Ring $\wtR$.}
Consider the base ring homomorphism $\Z[q,q^{-1}]\lra \Z$ taking $q$ to $1$. We use it to form the ring
\begin{equation}
    \wtR:= \Z\otimes_{\Z[q,q^{-1}]} R.
\end{equation}
Denote $\wtK_{c,n}:=1\otimes K_{c,n}$. Ring $\wtR$ is a free abelian group with a basis $\{\wtK_{c,n}\}$ over all $0\le c\le n$. Multiplication in this basis is given by equation \eqref{eq_multiplication} with quantum binomials replaced by classical binomials:
\begin{equation}\label{eq_multiplication_cl}
\wtK_{c,n}\wtK_{b,m} = \sum_{k \ge 0} \begin{pmatrix} n - c + b \\ k - c \end{pmatrix} \begin{pmatrix} m - b + c \\ k - b \end{pmatrix}
\wtK_{k,n+m}.
\end{equation}
Ring $\wtR$ is naturally graded, $\wtR=\oplus_{n\ge 0}\wtR_n$, with $\deg \wtK_{c,n}=n$ for all $c$.
There is a surjective homomorphism from $\wtR$ onto the ring of integer-valued polynomials in a variable $x$ via
\[
 \wtK_{c,n}\longmapsto \binom{x+c}{n}.
\]

{\bf Lusztig integral form of the Cartan subalgebra for quantum $\mf{sl}(2)$.}
Consider the ring $\Q[q^{\pm 1},K^{\pm 1}]$ of Laurent polynomials in two commuting variables $q,K$ and localize it by inverting $q^n-q^{-n}$ for all $n>0$. Denote the resulting ring by $R'$.
Let
\begin{equation}\label{eq_Kc1}
[K;c] := \frac{q^c K - q^{-c} K^{-1}}{q-q^{-1}} \in R'
\end{equation}
and define
\begin{equation}\label{eq_Kcn}
\bmat{K}{c}{n} := \frac{[K;c][K;c-1]\dots [K;c-n+1]}{[n]!}\in R'.
\end{equation}
Here $c\in \Z$ is an integer and $n\ge 0$.
We have
\[
\bmat{K}{c}{0}=1, \ \forall c\in\Z, \ \mathrm{and} \ \mathrm{define} \  \bbinom{K}{n}:= \bmat{K}{0}{n}.
\]
Lusztig integral form $\UzeroA$ of the quantum Cartan subalgebra for $\mf{sl}_2$ is defined as the $\Z[q,q^{-1}]$-subalgebra of $R'$ generated by $\bmat{K}{c}{n}$ over all $n\ge 0$, $c\in \Z$. This algebra contains the elements
\begin{equation}\label{eq_K_and_inverse}
K = \bmat{K}{1}{1} - q^{-1}\bbinom{K}{1} \  \mathrm{and} \ K^{-1}= \bmat{K}{1}{1} - q\bbinom{K}{1}.
\end{equation}
A basis of $\UzeroA$ is given, for example, by
\begin{equation}\label{eq_basis_U}
\Bigl\{ \bbinom{K}{n}: n\ge 0 \Bigr\} \cup \Bigl\{ \bmat{K}{1}{n}: n\ge 1 \Bigr\},
\end{equation}
see~\cite[Proposition~5.3]{GMSW1}.

\begin{prop}\label{prop_binom}
    The following relations hold in $R'$ and $\UzeroA$:
\begin{equation}\label{eq_q_binom_2}
    \sum_{k=0}^{n+1} (-1)^k \bbinom{n+1}{k} \bmat{K}{c+k}{n} =0
\end{equation}
\end{prop}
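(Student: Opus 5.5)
The plan is to treat the left-hand side of \eqref{eq_q_binom_2} as a difference operator acting on a function of $c$. We show that this operator kills $\bmat{K}{c}{n}$ because that function is a linear combination of exponentials $q^{(n-2j)c}$ with $0\le j\le n$. Since $\UzeroA$ is a subring of $R'$, it suffices to prove the identity in $R'$.

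\emph{Step 1: exponential expansion.} Fix $n\ge 0$. For each $i$ write
\[
[K;c-i]=\frac{q^{-i}K\,q^{c}-q^{i}K^{-1}q^{-c}}{q-q^{-1}}.
\]
Multiplying the $n$ factors $i=0,\dots,n-1$ and dividing by $[n]!$ gives
\[
\bmat{K}{c}{n}=\sum_{j=0}^{n} a_j\, q^{(n-2j)c},
\]
where the coefficients $a_j\in R'$ depend only on $K$, $q$, $n$ and not on $c$. Indeed, choosing the first term in $n-j$ factors and the second term in $j$ factors contributes $q^{(n-j)c}q^{-jc}=q^{(n-2j)c}$.

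\emph{Step 2: operator factorization.} Let $T$ be the shift $(Tf)(c)=f(c+1)$ acting on functions $\Z\to R'$. We use the balanced $q$-binomial theorem
\[
\prod_{j=0}^{n}\bigl(1+q^{\,n-2j}x\bigr)=\sum_{k=0}^{n+1}\bbinom{n+1}{k}x^k.
\]
This follows by induction on $n$ from the balanced Pascal rule $\bbinom{n+1}{k}=q^{k}\bbinom{n}{k}+q^{k-n-1}\bbinom{n}{k-1}$. Substituting $x=-T$, the left-hand side of \eqref{eq_q_binom_2} is $\bigl(\prod_{j=0}^{n}(1-q^{n-2j}T)\bigr)g$ evaluated at $c$, where $g(c)=\bmat{K}{c}{n}$. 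All factors commute.

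\emph{Step 3: annihilation.} The exponential $e_j(c)=q^{(n-2j)c}$ satisfies $Te_j=q^{n-2j}e_j$. Hence it is killed by the factor $1-q^{-(n-2j)}T=1-q^{n-2j'}T$ with $j'=n-j$, which appears in the product. Because the exponent set $\{n-2j\}_{0\le j\le n}$ is symmetric under negation, every $e_j$ is annihilated by the full product. By linearity and Step 1, the product annihilates $g$, which proves \eqref{eq_q_binom_2}.

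The only point requiring care is the balanced $q$-binomial theorem and its sign and exponent conventions; the rest is formal bookkeeping.
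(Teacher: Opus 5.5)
Your proposal is correct and is essentially the paper's proof. The paper likewise uses the balanced $q$-binomial theorem to rewrite the left-hand side as $\prod_{i=0}^{n}(1-q^{n-2i}E)$ applied to $c\mapsto \bmat{K}{c}{n}$, expands $\bmat{K}{c}{n}$ (via a scaling operator $S$) into the shift-eigenfunctions $q^{(n-2j)c}$, and notes that the factor with $i=n-j$ kills each one; your version just adds a check of the balanced Pascal rule and works directly with functions $\Z\to R'$.
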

This proposition must be well-known to researchers in quantum groups. A proof can be found in Section~\ref{subsec_prop_binom}.

\begin{prop}
For $n,m\ge 0$ and $0\le c\le n$, $0\le b\le m$
    the following relations hold:

\begin{equation}\label{eq_multiplication_2}
\bmat{K}{c}{n} \bmat{K}{b}{m} = \sum_{k \ge 0} \begin{bmatrix} n - c + b \\ k - c \end{bmatrix} \begin{bmatrix} m - b + c \\ k - b \end{bmatrix}
\bmat{K}{k}{n+m}.
\end{equation}
\end{prop}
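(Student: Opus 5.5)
The plan is to reduce \eqref{eq_multiplication_2} to an identity between ordinary $q$-binomial coefficients by specializing $K$ to powers of $q$. Both sides of \eqref{eq_multiplication_2} are Laurent polynomials in $K$ with coefficients in the field $\Q(q)$: they are $\Q(q)$-combinations of $K^{j}$ with $|j|\le n+m$ and $j\equiv n+m \pmod 2$. Two such Laurent polynomials agree once they agree at $K=q^N$ for infinitely many integers $N$, and in particular for all sufficiently large $N$. At $K=q^N$ we have $[K;c]=[N+c]$, so
\[
\bmat{K}{c}{n}\Big|_{K=q^N}=\bbinom{N+c}{n}.
\]
For $N+c\ge 0$ this is the usual balanced $q$-binomial. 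The statement is therefore equivalent to the identity
\begin{equation*}
\bbinom{N+c}{n}\bbinom{N+b}{m}=\sum_{k}\bbinom{n-c+b}{k-c}\bbinom{m-b+c}{k-b}\bbinom{N+k}{n+m}
\end{equation*}
for all $N\ge \max(n,m)$, under the hypotheses $0\le c\le n$ and $0\le b\le m$.

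To prove this $q$-identity I would first treat the case $q=1$ combinatorially, as a guide to the $q$-powers. Write $[N+c]=\{1,\dots,N+b\}\cup B$ and $[N+b]=\{1,\dots,N+c\}\cup C$ (with one of $B,C$ empty). A pair consisting of an $n$-subset of $[N+c]$ and an $m$-subset of $[N+b]$ should correspond to the data of an $(n+m)$-subset of $[N+k]$ together with choices counted by the two binomials in $k$. Rather than chase an explicit bijection, the cleaner route is algebraic. I would expand $\bbinom{N+k}{n+m}$ by the $q$-Chu--Vandermonde identity with the shift $N+k=(N)+(k)$, and do the same on the left with $N+c=N+c$ and $N+b=N+b$. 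This reduces both sides to combinations of products $\bbinom{N}{i}\bbinom{N}{j}$. One further Vandermonde step, $\bbinom{N}{i}\bbinom{N}{j}=\sum_t(\cdots)\bbinom{N+t}{i+j}$, puts both sides in the basis $\{\bbinom{N+t}{n+m}\}_t$ of polynomials in $q^{\pm N}$ of the relevant degree. A cleaner alternative is triangularity, described next.

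Alternatively, I would use a triangularity argument, which avoids guessing the $q$-powers. The space $W$ of Laurent polynomials in $K$ spanned by $K^{j}$ with $|j|\le n+m$ and $j\equiv n+m$ mod $2$ has dimension $n+m+1$. The elements $\bmat{K}{k}{n+m}$ for $k$ in any window of $n+m+1$ consecutive integers form a basis of $W$, since $\bmat{K}{k}{n+m}$ vanishes exactly at $K=\pm q^{-N}$ with $N+k\in\{0,\dots,n+m-1\}$. The left side lies in $W$. It vanishes at $K=q^N$ for $N+c\in\{0,\dots,n-1\}$ and for $N+b\in\{0,\dots,m-1\}$, and these vanishing conditions force its expansion to be supported on $\max(b,c)\le k\le\min(n+b,m+c)$. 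The coefficients are then found successively by evaluating at the points $N=-k+n+m$ (where only the extreme terms survive) and using \eqref{eq_q_binom_2} (Proposition~\ref{prop_binom}) to move between windows.

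The main obstacle is the identification of the coefficients as exactly the product $\bbinom{n-c+b}{k-c}\bbinom{m-b+c}{k-b}$, with no stray powers of $q$. This is a terminating balanced ${}_3\phi_2$/${}_4\phi_3$-type summation, where the paper's remark after Theorem~\ref{thm_R_free_mod} about Sears' identity applies. I would first try to derive it from the $q$-Pfaff--Saalsch\"utz summation after writing everything in terms of $q$-Pochhammer symbols and carefully tracking the balanced normalization $[n]=(q^n-q^{-n})/(q-q^{-1})$. I would check the result against the cases $n=m=1$ computed after Theorem~\ref{thm_R_free_mod}.
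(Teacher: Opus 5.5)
The first half of your proposal is sound, but it stops short of the actual content, so there is a genuine gap. The sound part is this. Both sides lie in the $(n+m+1)$-dimensional $\Q(q)$-span of the $K^j$ with $|j|\le n+m$ and $j\equiv n+m \pmod 2$. Agreement at infinitely many $K=q^N$ forces equality, and $\bmat{K}{c}{n}$ specializes to $\bbinom{N+c}{n}$.

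Your triangularity remarks are also correct. One small slip: the zeros of $\bmat{K}{k}{n+m}$ are at $K=\pm q^{N}$ with $N+k\in\{0,\dots,n+m-1\}$, not at $\pm q^{-N}$. Write $k_{\min}=\max(b,c)$, $k_{\max}=\min(n+b,m+c)$ and $L=k_{\max}-k_{\min}+1$. The zero set in $N$ of the left side is an interval of $n+m+1-L$ points. It coincides with the common zero set of the $\bmat{K}{k}{n+m}$ for $k_{\min}\le k\le k_{\max}$, which gives your support claim.

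The gap is that nothing in the proposal proves the coefficients are $\bbinom{n-c+b}{k-c}\bbinom{m-b+c}{k-b}$, and that is the whole content of the statement.

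\begin{itemize}
\item The ``algebraic route'' is circular. Its last ``Vandermonde step'' $\bbinom{N}{i}\bbinom{N}{j}=\sum_t(\cdots)\bbinom{N+t}{i+j}$ is the case $b=c=0$ of the statement with the coefficients left blank. Also, a balanced $q$-Vandermonde expansion with $N$ as one summand produces $N$-dependent factors $q^{\pm Ns}$, so you would not land in a basis with constant coefficients anyway.
\item The triangular route gives a unitriangular recursion by evaluating at $N=n+m-k$, but that recursion \emph{is} the identity at those values of $N$. It gives only the two extreme coefficients for free, at $N=n+m-k_{\max}$ and at $N=-k_{\min}-1$.
\item Since $L\le\min(n,m)+1$, there are no interior coefficients when $\min(n,m)\le 1$. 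So your check against $n=m=1$ cannot detect an error in the hard part.
\item The Pfaff--Saalsch\"utz computation that would carry the proof is only announced, not done.
\end{itemize}

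The paper does not prove this identity either; it cites \cite{GMSW2} and \cite{MartinezRuiz2025}. Your specialized identity is exactly the balanced quantum Chu--Vandermonde identity used in the proof of Theorem~\ref{thm_pushforward_equiv}. To see this, set $x=N+c$ and $d=b-c$, shift the summation index by $c$, and use $\bbinom{m-b+c}{m-k+c}=\bbinom{m-b+c}{k-b}$. Citing that identity closes the gap the same way the paper does.

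For a self-contained argument, induct on $m$; the case $m=0$ is trivial. Use the Pieri-type rule
\[
\bmat{K}{k}{M}\,[K;a]=[a-k+M]\bmat{K}{k+1}{M+1}+[k+1-a]\bmat{K}{k}{M+1},
\]
which follows by writing $[K;a]$ as a linear combination of $[K;k+1]$ and $[K;k-M]$.

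\begin{itemize}
\item For $0\le b\le m$, apply it to $[m+1]\bmat{K}{b}{m+1}=\bmat{K}{b}{m}[K;b-m]$. Use $[j]\bbinom{A}{j}=[A-j+1]\bbinom{A}{j-1}$ to factor out $\bbinom{n-c+b}{k-c}$ and clear the remaining binomials. The inductive step then reduces to $[j][j+m-B]+[m+1+j][B-j+1]=[m+1][B+1]$, with $B=m-b+c$ and $j=k-b$.
\item For $b=m+1$, apply it instead to $[m+1]\bmat{K}{m+1}{m+1}=\bmat{K}{m}{m}[K;m+1]$. This reduces the same way to $[j][A+m+2-j]+[m+1-j][A-j+1]=[m+1][A+1]$, with $A=n-c+m$ and $j=k-c$.
\item Both identities follow immediately from $(q-q^{-1})^2[u][v]=q^{u+v}+q^{-u-v}-q^{u-v}-q^{v-u}$.
\end{itemize}
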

The sum above is over $k$ such that $\max(b,c) \le k \le  \min(n+b, m+c)$, due to $\begin{bmatrix} n \\ k \end{bmatrix}=0$ for $k<0$ or $k>n$. See~\cite[Theorem 1.2]{GMSW2} or~\cite{MartinezRuiz2025}
for a proof.
$\square$

We see that there is a surjective homomorphism $\psi:R\lra \UzeroA$ in \eqref{eq_hom_surj} given by \eqref{eq_Kcn_int}. Comparing the bases of these two $\Z[q,q^{-1}]$-algebras shows that $\psi$ is not injective. A nonzero element in the kernel is shown in \eqref{eq_kernel_el}.

%
%

\section{Coherent sheaves on projective spaces}
\label{sec_coherent}

\subsection{Coherent sheaves on symmetric powers of a curve}

Fix an algebraically closed field $\kk$ and consider a smooth curve $Y$ over $\kk$, not necessarily compact. Let $Y_n:=S^n(Y)$, be the $n$-th symmetric power of $Y$,
which parametrizes effective divisors of degree $n$ on $Y$. It is a smooth $n$-dimensional variety, and there are natural ``addition'' maps
\begin{equation}
    Y_n \times Y_m \stackrel{f_{n,m}}{\lra} Y_{n+m}, \qquad
(D,E)\longmapsto D+E.
\end{equation}
Map $f_{n,m}$ sends a pair of effective divisors to their sum.

\begin{prop}\label{prop_good_morphism}
    Morphism $f_{n,m}$ is finite, flat and surjective. It is finite locally free of degree $\deg(f_{n,m})=\binom{n+m}{n}.$
\end{prop}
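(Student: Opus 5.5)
We prove this by reducing to the quotient maps from Cartesian powers, using the fact that $Y_n=Y^n/S_n$. Let $\pi_n:Y^n\lra Y_n$ be the quotient map. It is finite and surjective, being the quotient of a quasi-projective variety by a finite group. The composite
\[
Y^n\times Y^m = Y^{n+m}\xrightarrow{\pi_n\times\pi_m} Y_n\times Y_m \xrightarrow{f_{n,m}} Y_{n+m}
\]
equals $\pi_{n+m}$. Since $\pi_n\times \pi_m$ is surjective and $\pi_{n+m}$ is finite, $f_{n,m}$ is surjective. It is also quasi-finite, because a divisor $F$ of degree $n+m$ has only finitely many decompositions $F=D+E$ with $\deg D=n$.

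For finiteness I would show properness. One option is the valuative criterion together with the fact that $\pi_{n+m}$ is proper and $\pi_n\times\pi_m$ is surjective: given a map from a DVR into $Y_{n+m}$ whose generic point lifts, lift it (after a finite extension) to $Y^{n+m}$, then push down to $Y_n\times Y_m$. A cleaner option is to realize $Y_n\times Y_m$ as the quotient $Y^{n+m}/(S_n\times S_m)$. Then $f_{n,m}$ is the map $Y^{n+m}/(S_n\times S_m)\to Y^{n+m}/S_{n+m}$ induced by the inclusion of invariant rings $A^{S_{n+m}}\subset A^{S_n\times S_m}$ on affine opens $U^{n+m}$, for $U\subset Y$ affine. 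Here $A=\mcO(U)^{\otimes(n+m)}$ is finite over $A^{S_{n+m}}$ by Noether's theorem, so the intermediate ring $A^{S_n\times S_m}$, being a submodule of a finite module over a Noetherian ring, is finite too. This gives finiteness. To cover $Y_{n+m}$ we need affine opens of the form $U^{(n+m)}$ whose images cover; for quasi-projective $Y$, any finite set of points lies in a common affine open, which suffices.

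Flatness is where smoothness enters. Both $Y_n\times Y_m$ and $Y_{n+m}$ are smooth varieties of dimension $n+m$, since symmetric powers of a smooth curve are smooth. A finite morphism between regular schemes of the same dimension, with finite (hence zero-dimensional) fibers, is flat by miracle flatness: a Cohen--Macaulay source over a regular target with fiber dimension equal to the difference of dimensions. Hence $f_{n,m}$ is finite and flat, and therefore finite locally free. I expect this to be the main point to verify carefully, namely that the smoothness of $Y_k$ is available. It is standard: locally étale, $Y$ looks like $\mathbb{A}^1$, and $S^k(\mathbb{A}^1)\cong\mathbb{A}^k$ via elementary symmetric functions. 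I would cite this rather than reprove it.

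Finally, since $Y_{n+m}$ is connected, the rank of the locally free sheaf $(f_{n,m})_*\mcO$ is constant, and it suffices to compute it at a single point. Take $F=y_1+\dots+y_{n+m}$ with distinct points $y_i$. At such $F$ the map $\pi_{n+m}$ is étale, so $f_{n,m}$ is étale over a neighborhood of $F$ and its fiber is reduced. The fiber consists of the pairs $(D,E)$ obtained by choosing which $n$ of the points go into $D$, so there are exactly $\binom{n+m}{n}$ reduced points. Therefore $\deg f_{n,m}=\binom{n+m}{n}$. Alternatively, one can compare generic degrees: $\deg\pi_{n+m}=(n+m)!$ and $\deg(\pi_n\times\pi_m)=n!\,m!$, and degrees are multiplicative.
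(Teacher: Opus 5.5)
Your proposal is correct and follows essentially the same route as the paper: finiteness comes from realizing $f_{n,m}$ as $Y^{n+m}/(S_n\times S_m)\to Y^{n+m}/S_{n+m}$ on affine patches, surjectivity from decomposing divisors, flatness from miracle flatness (Cohen--Macaulay source, regular target), and the degree $\binom{n+m}{n}$ from counting the fiber over a reduced divisor; you additionally check that this fiber is reduced and that the affine patches cover $Y_{n+m}$, which the paper leaves implicit. One small slip is that surjectivity of $f_{n,m}$ follows from surjectivity of $\pi_{n+m}=f_{n,m}\circ(\pi_n\times\pi_m)$ alone, not from surjectivity of $\pi_n\times\pi_m$ together with finiteness of $\pi_{n+m}$.
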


\begin{proof}
Restricting to an affine patch $U=\mathrm{Spec}\, A\subset Y$, the map $f_{n,m}:S^n(U)\times S^m(U)\lra S^{n+m}(U)$ is induced by the inclusion of commutative rings
\[
(A^{\otimes (n+m)})^{S_{n+m}} \subset (A^{\otimes n})^{S_n} \otimes (A^{\otimes m})^{S_m},
\]
where $S_n\times S_m\subset S_{n+m}$ is a maximal parabolic subgroup.
Over $Y$, $f_{n,m}$ is the morphism of finite quotients induced by the inclusion
\[
S_n\times S_m
\subset
S_{n+m}\colon
\qquad
Y^{n+m}/(S_n\times S_m)
\longrightarrow
Y^{n+m}/ S_{n+m}.
\]
It follows that $f_{n,m}$ is finite. It is surjective because every effective divisor of degree $n+m$ is a sum of effective subdivisors of degrees $n$ and $m$. Both the source and the target are smooth varieties of dimension $n+m$. In particular, the source is Cohen--Macaulay and the target is regular. Miracle flatness, see~\cite[Theorem 18.16(b)]{Eisenbud} or~\cite[Tag 00R4]{stacks-project},  implies that the finite morphism $f_{n,m}$ is flat. Since the morphism is both finite and flat, it is finite locally free. The degree of $f_{n,m}$ can be computed over the open subset of $Y_{n+m}$ consisting of reduced divisors. A divisor supported at $n+m$ distinct points has exactly $
\binom{n+m}{n}$
effective subdivisors of degree $n$.
\end{proof}

Also note that $f_{n,m}$ is proper and affine.
The addition maps are commutative and associative. Precisely,
$f_{n,m}=f_{m,n}\circ\tau,$
where
\[
\tau\colon Y_n\times Y_m\longrightarrow Y_m\times Y_n
\]
interchanges the two factors, and
\[
f_{n+m,\ell}\circ
\bigl(f_{n,m}\times\operatorname{id}_{Y_\ell}\bigr)
=
f_{n,m+\ell}\circ
\bigl(\operatorname{id}_{Y_n}\times f_{m,\ell}\bigr).
\]
Consider the pushforward and pullback functors between abelian categories of coherent sheaves on these varieties induced by the map $f_{n,m}$:
\begin{equation}\label{eq_maps_fnm}
\begin{tikzcd}[cramped, sep=large]
\Coh(Y_n\times Y_m)
  \arrow[r, shift left=0.7ex, "{(f_{n,m})_*}"]
& \Coh(Y_{n+m})
  \arrow[l, shift left=0.7ex, "{(f_{n,m})^*}"]
\end{tikzcd}
\end{equation}

\begin{prop}
    Functors $(f_{n,m})_{\ast} $ and $(f_{n,m})^{\ast}$ are exact.
\end{prop}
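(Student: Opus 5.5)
The plan is to derive both exactness statements from Proposition~\ref{prop_good_morphism}, which says that $f:=f_{n,m}$ is finite and flat; in particular $f$ is affine.

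\emph{Exactness of the pushforward.} Since $f$ is affine, its higher direct images of quasi-coherent sheaves vanish: $R^i f_\ast\mcF=0$ for $i>0$. This is checked locally on the target. Over an affine open $V=\mathrm{Spec}\,B\subset Y_{n+m}$ the preimage $f^{-1}(V)=\mathrm{Spec}\,C$ is affine. Higher cohomology of quasi-coherent sheaves on an affine scheme vanishes, so on $V$ the functor $f_\ast$ is just restriction of scalars from $C$-modules to $B$-modules. Restriction of scalars is obviously exact.

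Hence a short exact sequence $0\to\mcF'\to\mcF\to\mcF''\to 0$ of coherent sheaves on $Y_n\times Y_m$ maps to a short exact sequence under $f_\ast$. Finiteness of $f$ ensures that $f_\ast$ indeed lands in $\Coh(Y_{n+m})$: a finitely generated $C$-module is finitely generated over $B$, because $C$ is a finite $B$-module.

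\emph{Exactness of the pullback.} The functor $f^\ast$ is always right exact, so only left exactness needs an argument. Locally, over $V=\mathrm{Spec}\,B$ as above, $f^\ast$ is the functor $M\mapsto C\otimes_B M$. Proposition~\ref{prop_good_morphism} gives that $f$ is flat, so $C$ is a flat $B$-module, and tensoring with $C$ preserves injections.

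Exactness of a sequence of sheaves can be checked on an open cover, or equivalently on stalks: $(f^\ast\mcG)_x=\mcO_{Y_n\times Y_m,x}\otimes_{\mcO_{Y_{n+m}},f(x)}\mcG_{f(x)}$, and this local ring map is flat. So $f^\ast$ is exact. It also preserves coherence, since pullback of a finitely presented module is finitely presented.

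Neither step poses a real obstacle, because both reduce immediately to the finiteness and flatness established in Proposition~\ref{prop_good_morphism}, which did the genuine work via miracle flatness. The only point needing care is the reduction to affine pieces, which uses that the preimage of an affine open under the affine map $f$ is affine.
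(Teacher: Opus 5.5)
Your proposal is correct and takes the same approach as the paper. Exactness of $(f_{n,m})_\ast$ comes from finiteness (hence affineness) of $f_{n,m}$, and exactness of $(f_{n,m})^\ast$ from its flatness, both supplied by Proposition~\ref{prop_good_morphism}. You also spell out the local reductions the paper leaves implicit: restriction of scalars over affine opens, base change along a flat ring map, and preservation of coherence.
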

\begin{proof}
For a finite morphism $f$, the functor $f_{\ast}$ is exact on the categories of coherent sheaves. For a flat morphism $f$, the functor $f^{\ast}$ is exact.
\end{proof}

Map $f_{n,m}$ induces a biexact bifunctor
\[
(f_{n,m})_{\circ} \ : \
\Coh(Y_n)\times \Coh(Y_m) \lra \mathsf{Coh}(Y_n\times Y_m) \stackrel{(f_{n,m})_{\ast}}{\lra}
\Coh(Y_{n+m}),
\]
given by
\[
(\mcF_1,\mcF_2)\longmapsto (f_{n,m})_{\ast}(\mcF_1\boxtimes \mcF_2).
\]
The category
\[
\Coh_{\ast}(Y) \ := \ \oplusop{n\ge 0} \Coh(Y_n)
\]
is abelian. Summing $(f_{n,m})_{\circ}$ over all $n,m\ge 0$ gives a biexact bifunctor
\[
f_{\circ} : \Coh_{\ast}(Y) \times \Coh_{\ast}(Y) \lra \Coh_{\ast}(Y)
\]
making $\Coh_{\ast}(Y)$ into a symmetric monoidal abelian category. The unit object $\one$ is given by the structure sheaf of the point $Y_0$. Let us also write the bifunctor in this category as
\begin{equation}\label{eq_tensor_prod}
\mcF_1 \widetilde{\boxtimes} \mcF_2 := (f_{n,m})_{\circ}(\mcF_1,\mcF_2) = (f_{n,m})_{\ast}(\mcF_1\boxtimes \mcF_2).
\end{equation}

\subsection{Pushforward for the symmetrizing map  from \texorpdfstring{$\PP^n\times \PP^m$}{P(n)*P(m)} to \texorpdfstring{$\PP^{n+m}$}{P(n+m)}}
\label{subsec_pushforward}

Let us now specialize from an arbitrary smooth curve $Y$ to $Y=\PP^1$. The symmetric powers $S^n(\PP^1)=\PP^n$. Denote the map $f_{n,m}:\PP^n\times \PP^m\lra \PP^{n+m}$  simply by $f$.
 To compute the pushforward $f_{\ast}$ of the sheaf $\mcO_{\PP^n}(i) \boxtimes \mcO_{\PP^m}(j)$ on $\PP^n\times \PP^m$, for $i,j$ in a suitable range of $j-i$, we use the Horrocks' Criterion, see~\cite[Theorem 2.3.1]{OSS}, and then do a computation to match the Hilbert polynomials.
The multiplicities in Theorem~\ref{thm_pushforward} below  arise from the Chu-Vandermonde convolution identity \eqref{eq_CVa}.

\begin{thm}\label{thm_pushforward} The pushforward sheaf of $\mcO_{\PP^n}(i) \boxtimes \mcO_{\PP^m}(j)$ under the map $f=f_{n,m}$ splits as a direct sum of line bundles on $\PP^{n+m}$ if  $-m \le j - i \le n$. When this condition holds, the pushforward is given by:
\begin{equation}\label{eq_f_decomposition}
f_{\ast}(\mcO_{\PP^n}(i) \boxtimes \mcO_{\PP^m}(j)) \cong \bigoplus_{k=0}^{m} \binom{j-i+m}{k} \binom{i-j+n}{m-k}\mcO_{\PP^{n+m}}(i + k - m).
\end{equation}
\end{thm}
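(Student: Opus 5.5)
Set $\mcF := f_{\ast}(\mcO_{\PP^n}(i)\boxtimes\mcO_{\PP^m}(j))$, with $f=f_{n,m}$ as in Section~\ref{subsec_pushforward}. By Proposition~\ref{prop_good_morphism}, $f$ is finite and flat, so $\mcF$ is locally free of rank $\binom{n+m}{n}$. The plan is to prove the splitting with Horrocks' Criterion \cite[Theorem 2.3.1]{OSS}: a vector bundle on $\PP^N$, $N=n+m$, splits as a sum of line bundles if and only if $H^p(\PP^N,\mcF(t))=0$ for all $0<p<N$ and all $t\in\Z$. Once splitting is known, the summands are pinned down by matching the function $t\mapsto h^0(\mcF(t))$, equivalently the Hilbert polynomial, against the right-hand side of \eqref{eq_f_decomposition}.

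\textbf{Step 1: compute the twisted cohomology.} First check that $f^{\ast}\mcO_{\PP^{N}}(1)\cong \mcO(1)\boxtimes\mcO(1)$. This holds because a hyperplane in $\PP^{N}=S^{N}\PP^1$ is the locus of divisors containing a fixed point, and it pulls back to the union of the corresponding hyperplanes in the two factors. The projection formula and finiteness of $f$ then give
\[
H^p(\PP^N,\mcF(t)) \cong H^p\bigl(\PP^n\times\PP^m,\ \mcO(i+t)\boxtimes\mcO(j+t)\bigr),
\]
and Künneth computes the right side. Cohomology of line bundles on $\PP^n$ lives only in degrees $0$ and $n$. $H^n(\PP^n,\mcO(a))\neq0$ requires $a\le -n-1$, and $H^0(\PP^n,\mcO(a))\ne 0$ requires $a\ge 0$. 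The only possible intermediate cohomology therefore lies in the degrees $p=n$, coming from $H^n\otimes H^0$, and $p=m$, coming from $H^0\otimes H^m$.

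\textbf{Step 2: verify Horrocks' vanishing.} The term $H^n\otimes H^0$ needs $i+t\le -n-1$ and $j+t\ge 0$, so $j-i\ge n+1$. The term $H^0\otimes H^m$ needs $j+t\le -m-1$ and $i+t\ge 0$, so $i-j\ge m+1$. Both are excluded by the hypothesis $-m\le j-i\le n$, so $\mcF$ has no intermediate cohomology and Horrocks' Criterion gives $\mcF\cong\bigoplus_k \mcO(a_k)$. This step is clean. It also explains the range condition: outside it, intermediate cohomology appears and the bundle does not split.

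\textbf{Step 3: identify the summands (main obstacle).} The multiset $\{a_k\}$ is determined by the numbers $h^0(\mcF(t))=\binom{i+t+n}{n}\binom{j+t+m}{m}$ for all $t$. These agree with the corresponding polynomials for all $t\gg0$, and a direct sum of line bundles is determined by its Hilbert function. Write $d=j-i$. It then suffices to prove the polynomial identity in $s=i+t$:
\[
\binom{s+n}{n}\binom{s+d+m}{m}=\sum_{k=0}^{m}\binom{d+m}{k}\binom{n-d}{m-k}\binom{s+k-m+N}{N}.
\]
I would first reduce to a product formula for binomials. One route is to write $\binom{s+d+m}{m}$ by Chu--Vandermonde \eqref{eq_CVa} as $\sum_k\binom{d+m}{k}\binom{s}{m-k}$ and then absorb factors. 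A cleaner route is a generating-function check: multiply both sides by $x^s$ and sum over $s$. The right side becomes $\sum_k\binom{d+m}{k}\binom{n-d}{m-k}x^{m-k}/(1-x)^{N+1}$, so one needs
\[
\sum_s\binom{s+n}{n}\binom{s+d+m}{m}x^s\,(1-x)^{N+1}=\sum_k\binom{d+m}{k}\binom{n-d}{m-k}x^{m-k}.
\]
This is a standard Narayana/Vandermonde-type evaluation; it can be checked by the symmetric reasoning of counting lattice paths, or via ${}_2F_1$ Pfaff--Saalschütz. Getting the signs and ranges right here, which uses $0\le d+m$ and $0\le n-d$, is the part I expect to need the most care.
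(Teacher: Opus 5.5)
Your proposal is correct and is essentially the paper's proof. It uses Horrocks' criterion via the projection formula and K\"unneth, then matches $h^0(\mcF(t))$, and your Step~3 identity is precisely \eqref{eq_CVa} after substituting $x=s+n$ and replacing $d$ by $j-i+m-n$; the paper simply cites it as a Chu--Vandermonde-type identity rather than proving it. Your insistence on the full Hilbert function is the right framing: both sides vanish for $s<0$, which your generating series over $s\ge 0$ captures, and equal Hilbert polynomials alone do not determine a split bundle (e.g.\ $\mcO^{\oplus 2}$ versus $\mcO(1)\oplus\mcO(-1)$ on $\PP^1$), a point the paper's non-equivariant write-up glosses over but its equivariant proof handles.
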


(To make the numbers easier to read, we write $u\mcL$ instead of $\mcL^{\oplus u}$ to denote the direct sum of $u$ copies of $\mcL$.)

\begin{proof}

\noindent The map $f: \PP^n \times \PP^m \to \PP^{n+m}$ corresponds to the tensor product mapping $\operatorname{Sym}^n V \otimes \operatorname{Sym}^m V \to \operatorname{Sym}^{n+m} V$, where $V\cong \kk^2$. Pulling back a hyperplane class from $\PP^{n+m}$ restricts to a bilinear form on the product. Therefore, the pullback of the twisting sheaf is
\begin{equation}\label{eq_pullback_O}
f^{\ast} (\mcO_{\PP^{n+m}}(1)) \cong \mcO_{\PP^n}(1) \boxtimes \mcO_{\PP^m}(1).
\end{equation}

\noindent Let $\mathcal{E} = f_*(\mcO_{\PP^n}(i) \boxtimes \mcO_{\PP^m}(j))$. Note that $\mathcal{E}$ is a bundle, since the pushforward of a locally free sheaf along a finite, flat morphism (Proposition~\ref{prop_good_morphism}) of smooth varieties is a locally free sheaf.
To determine when $\mathcal{E}$ splits into a direct sum of line bundles we use Horrocks' Criterion~\cite[Theorem 2.3.1]{OSS}, which requires the intermediate cohomology to vanish: $H^r(\PP^{n+m}, \mathcal{E}(t)) = 0$ for all $0 < r < n+m$ and all twists $t \in \mathbb{Z}$.
Using relative cohomology and the K\"unneth formula, we have:
$$H^r(\PP^{n+m}, \mathcal{E}(t)) \cong H^r(\PP^n \times \PP^m, \mcO(i+t) \boxtimes \mcO(j+t))\cong  \oplusop{p+q=r} \left( H^p(\PP^n, \mcO(i+t)) \otimes H^q(\PP^m, \mcO(j+t)) \right).$$
Because line bundles on projective spaces only have non-zero cohomology in degrees $0$ and the top dimension, the only intermediate degrees $r$ where this product can be non-zero are $r=n$ and $r=m$:
\begin{itemize}
    \item $H^n \neq 0$ if and only if $i+t \le -n-1$ and $j+t \ge 0$, which implies $j - i \ge n+1$.
    \item $H^m \neq 0$ if and only if $j+t \le -m-1$ and $i+t \ge 0$, which implies $i - j \ge m+1$.
\end{itemize}
Thus, intermediate cohomology vanishes  if the difference between the twists is bounded as follows: $-m \le j - i \le n$. When this holds, Horrocks' Criterion guarantees that $f_{\ast}(\mcO(i) \boxtimes \mcO(j))$ splits into a direct sum of line bundles.

\vspace{0.07in}

\noindent Assuming $-m \le j - i \le n$, we can write $\mathcal{E} \cong \bigoplus_k \mcO_{\PP^{n+m}}(a_k)^{\oplus C_k}$ for a finite sequence $(a_k)$ with $\dots a_k < a_{k+1} < \dots$ and $C_k\ge 1$. We find the twists $a_k$ and multiplicities $C_k$ by comparing the dimensions of global sections for sufficiently large $t$ as above.
On $X = \PP^n \times \PP^m$, for large $t$, the dimension of the space of sections is:
\begin{equation}\label{eq_h0one}
h^0(X, \mcO(i+t) \boxtimes \mcO(j+t)) = \binom{t+i+n}{n}\binom{t+j+m}{m}
\end{equation}
On $\PP^{n+m}$, that dimension for the direct sum is:
\begin{equation}\label{eq_h0two}
h^0\left(\PP^{n+m}, \bigoplus_k \mcO(a_k + t)^{\oplus C_k}\right) = \sum_k C_k \binom{t + a_k + n + m}{n+m}.
\end{equation}
Expressions \eqref{eq_h0one} and \eqref{eq_h0two} are equal for all large integer $t$. Consequently, they are equal as polynomials in $t$. Shift the variable $t$ to  $x = t+i+n$ and define  $d = j - i + m - n$. The splitting condition $-m \le j-i \le n$ is equivalent to $-n \le d \le m$. Recall the Chu-Vandermonde identity:
\begin{equation}\label{eq_CVa}
\binom{x}{n}\binom{x+d}{m} = \sum_{k=0}^m \binom{n+d}{k}\binom{m-d}{m-k}\binom{x+k}{n+m}.
\end{equation}
``Numerators'' of the three binomials in \eqref{eq_h0one} and \eqref{eq_h0two} that contain $t$ match the three binomial ``numerators'' in \eqref{eq_CVa} that contain $x$.
Consequently, numbers $C_k$ are given by the two other binomial products on the RHS of \eqref{eq_CVa}.
Substituting back $x = t+i+n$, the basis terms on the right side are $\binom{t+i+n+k}{n+m}$. Matching the arguments of the binomial coefficients yields:
$$t + i + n + k = t + a_k + n + m \implies a_k = i + k - m.$$
The multiplicities of line bundles are given by:
$$C_k = \binom{n+d}{k}\binom{m-d}{m-k} = \binom{j-i+m}{k}\binom{i-j+n}{m-k}.$$
This completes the proof of Theorem~\ref{thm_pushforward}.
\end{proof}

\begin{remark}
    The range of $k$ in \eqref{eq_f_decomposition} with nonzero coefficients is
$$\max(0, j +m - i - n) \le k \le m + \min(0, j - i)$$
In terms of $d = j - i + m - n$, these conditions can be written as
$\max(0,d)\le k\le \min(m,n+d)$.
\end{remark}

\begin{remark} The abelian category of coherent sheaves on $\PP^k$ satisfies the Krull-Schmidt property of the uniqueness of decomposition of an object into a direct sum of indecomposables (with multiplicities), which for our pushforward sheaves are all line bundles, subject to the range condition on $j-i$.
\end{remark}

All our varieties $X$ in the present paper are smooth, and the
natural homomorphism  $K_0(X)\lra G_0(X)$ from the Grothendieck group of vector bundles on $X$ to that of the category $\Coh(X)$ of coherent sheaves is an isomorphism~\cite{Manin}. For this reason, we will write $K_0(X)$ for the Grothendieck group of either category.

$K_0(\PP^n)$ is a free abelian group on the symbols $[\mcO(i)]$ for $-n\le i \le 0$, see~\cite[Theorem 8.5]{Weibel} or~\cite{Manin}, for instance. For $-n\le i \le 0$ and $-m\le j\le 0$ the inequalities   $-m\le j-i\le n$ and decomposition \eqref{eq_f_decomposition} hold.
Applying Theorem~\ref{thm_pushforward}, we obtain:
\begin{prop}\label{prop_iso_noneq}
    There is an isomorphism of commutative associative rings
    \begin{equation}\label{eq_iso_wtR}
\wtR \cong \oplusop{n\ge 0} K_0(\PP^n)
    \end{equation}
    that sends $\wtK_{c,n}$ to $[\mcO_{\PP^n}(-c)]$.
\end{prop}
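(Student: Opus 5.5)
The plan is to define a ring homomorphism $\Phi:\wtR\lra \oplusop{n\ge 0} K_0(\PP^n)$ on generators and check bijectivity degree by degree. The target is a ring under the product induced by the biexact bifunctor $\widetilde{\boxtimes}$ from \eqref{eq_tensor_prod}, specialized to $Y=\PP^1$. It is commutative and associative with unit $[\mcO_{\PP^0}]$, since the symmetric monoidal structure on $\Coh_{\ast}(\PP^1)$ descends to Grothendieck groups and $(f_{n,m})_\ast$ is exact.

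\textbf{Presentation of $\wtR$.} Since $\wtR=\Z\otimes_{\Z[q,q^{-1}]}R$, it is the commutative ring with generators $\wtK_{c,n}$, $c\in\Z$, $n\ge 0$, subject to the $q=1$ specializations of \eqref{eq_q_binom} and \eqref{eq_multiplication}, i.e.\ \eqref{eq_multiplication_cl}. A homomorphism out of $\wtR$ is therefore determined by sending $\wtK_{c,n}\mapsto[\mcO_{\PP^n}(-c)]$ and checking these two families of relations. I also check that $\wtK_{c,0}=1$ goes to the unit; this holds because $\mcO_{\PP^0}(-c)=\mcO_{\PP^0}$.

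\textbf{Relation \eqref{eq_q_binom} at $q=1$.} Let $W=\kk^{n+1}$ with $\PP^n=\PP(W)$. Take the Koszul resolution on $\PP^n$,
\[
0\to \Lambda^{n+1}W\otimes\mcO(-n-1)\to\cdots\to W\otimes \mcO(-1)\to \mcO\to 0,
\]
which is exact because the $n+1$ coordinate sections have no common zero. Twisting by $\mcO(-c)$ gives $\sum_k(-1)^k\binom{n+1}{k}[\mcO(-c-k)]=0$ in $K_0(\PP^n)$, which is exactly the relation.

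\textbf{Relation \eqref{eq_multiplication_cl}.} For $0\le c\le n$ and $0\le b\le m$, put $i=-c$ and $j=-b$. Then $-m\le j-i\le n$, so Theorem~\ref{thm_pushforward} applies and gives
\[
[\mcO(-c)]\cdot[\mcO(-b)]=\sum_{k'=0}^m\binom{c-b+m}{k'}\binom{b-c+n}{m-k'}[\mcO_{\PP^{n+m}}(-c+k'-m)].
\]
Reindex by $k=c+m-k'$, so that $\mcO(-c+k'-m)=\mcO(-k)$. The coefficients become $\binom{m-b+c}{c+m-k}\binom{n-c+b}{k-c}$. Using $\binom{m-b+c}{c+m-k}=\binom{m-b+c}{k-b}$, this matches \eqref{eq_multiplication_cl} termwise, including the range of summation. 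The bookkeeping in this reindexing is the only place needing care; everything else is formal.

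\textbf{Bijectivity.} $\Phi$ is graded, with $\wtR_n\to K_0(\PP^n)$. By Theorem~\ref{thm_R_free_mod} specialized at $q=1$, $\wtR_n$ is free with basis $\wtK_{0,n},\dots,\wtK_{n,n}$. These map to $[\mcO(0)],\dots,[\mcO(-n)]$, which form a $\Z$-basis of $K_0(\PP^n)$ as recalled before the proposition. So each graded piece of $\Phi$ is an isomorphism, and hence $\Phi$ is a ring isomorphism.

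To avoid any circularity with Theorem~\ref{thm_R_free_mod}, whose alternative proof relies on Theorem~\ref{thm_isom}, I argue bijectivity without it. Surjectivity onto each $K_0(\PP^n)$ is clear. For injectivity, note that $\wtR_n$ is spanned by $\wtK_{0,n},\dots,\wtK_{n,n}$ via \eqref{eq_q_binom}. Their images are linearly independent, so this spanning set is already a basis and $\Phi$ is injective.
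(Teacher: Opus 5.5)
Your proof is correct and follows essentially the same route as the paper. The product is computed by Theorem~\ref{thm_pushforward} with $i=-c$, $j=-b$ and the same reindexing $k'=m+c-k$ (the paper's ``$k\mapsto m-i-k$''), and bijectivity comes from the basis $[\mcO(0)],\dots,[\mcO(-n)]$ of $K_0(\PP^n)$. Your Koszul check of \eqref{eq_q_binom} and your non-circular injectivity argument (a spanning set of $\wtR_n$ mapping onto a basis) make explicit what the paper leaves implicit. One small correction: the spanning claim for $\wtR_n$ also needs \eqref{eq_multiplication_cl} to collapse products of generators, not only \eqref{eq_q_binom}.
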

Under this isomorphism, the multiplication rule \eqref{eq_multiplication_cl}  lifts to an isomorphism \eqref{eq_f_decomposition} of vector bundles on $\PP^{n+m}$, which is a stronger statement than just a match of $\wtR$ with the Grothendieck ring.
This proposition is stated as Theorem~\ref{thm_isom_noneq} in the Introduction. To get a match between the formulas, replace $c=-i$ and $b=-j$ in \eqref{eq_f_decomposition}. Parameter $k$ in \eqref{eq_f_decomposition} is replaced by $m-i-k$ in
\eqref{eq_multiplication_cl}. One can then check that the entries of the binomials match, up to the symmetry $\binom{a}{b}=\binom{a}{a-b}$.

\begin{remark}
    The relation between the ring $\wtR$ and coherent sheaves on the disjoint union of $\PP^n$'s was originally observed by looking at the first relation in Example~\ref{example_motivate} and noticing that it is categorified by line bundles $\mcO(c+i)$ on $\PP^1$, in the sense of having short exact sequences
    \[
    0 \lra \mcO(c) \lra \mcO(c+1)^2\lra \mcO(c+2) \lra 0.
    \]
    In the Grothendieck group $K_0(\PP^1)$ there is a relation
    \[
2[\mcO(c+1)]=[\mcO(c)]+[\mcO(c+2)]
    , \]
    matching the above relation from Example~\ref{example_motivate}. Next, for line bundles on $\PP^2$, there is an exact sequence
    \[
    0\lra \mcO(c)\lra \mcO(c+1)^3 \lra \mcO(c+2)^3 \lra \mcO(c+3) \lra 0,
    \]
    descending in the Grothendieck group $K_0(\PP^2)$ to
    \[
    [\mcO(c)]-3[\mcO(c+1)]+3[\mcO(c+2)]- [\mcO(c+3)] = 0 ,
    \]
    which matches the second relation in Example~\ref{example_motivate}. From here it is an easy guess to defining the multiplication via the pushforward of the map $f_{n,m}$, adding $\mathbb{G}_m$-equivariance to introduce $q$ on the Grothendieck group level, and to the results of the paper.
\end{remark}

\begin{remark}
Note that, as functors between abelian categories, $(f_{n,m})^{\ast}$ is left adjoint to $(f_{n,m})_{\ast} $, while the right adjoint is given by
\begin{equation}
(f_{n,m})^{!}(\mcF) \cong f_{n,m}^{\ast}(\mcF)\otimes \mcO_{\PP^n\times\PP^m}(m,n).
\end{equation}
In particular, the left and right adjoints for $(f_{n,m})_{\ast}$ are related by a twist by an invertible functor. Consequently, one obtains an infinite in both directions chain of adjoint functors
\[
  \dots, (f_{n,m})^{\ast},(f_{n,m})_{\ast},(f_{n,m})^!,\dots \]
  Via the isomorphism \eqref{eq_iso_wtR} and the isomorphism $K_0(\PP^n\times \PP^m)\cong K_0(\PP^n)\otimes K_0(\PP^m)$, the induced map $[f^{\ast}]: \wtR \lra \wtR \otimes_{\Z} \wtR$ (summing over all $n,m$) is given by
  \[
  \wtK_{c,n}\lra \sum_{k=0}^n \wtK_{c,k}\otimes \wtK_{c,n-k},  \ \forall c\in \Z.
  \]
\end{remark}
The internal tensor product of vector bundles (or coherent sheaves) on $\PP^n$ induces a ring structure on $K_0(\PP^n)$, for each $n$. For the ring $\wtR$, this translates into a commutative ring structure $\ast$ on each homogeneous component $\wtR_n$, with $\wtK_{c,n}\ast \wtK_{d,n}=\wtK_{c+d,n}$. Ring $\wtR_n$ is generated by $\wtK_{1,n}$, with the defining relation $\wtR_n\cong \Z[\wtK_{1,n}]/((\wtK_{1,n}-1)^{n+1})$.
One can extend it to a (nonunital idempotented) ring structure on $\wtR$ via $\wtK_{c,n}\ast \wtK_{d,m}=0$ if $n\not= m$.
Adams operations carry over from $K_0(\PP^n)$ to $\wtR_n$ and make the latter into a $\lambda$-ring, via $\psi^k(\wtK_{c,n})=\wtK_{kc,n}$.

The Euler characteristic of ext groups descends to a bilinear form on $K_0(\PP^n)$:
\[
\chi([\mcF_0],[\mcF_1]) = \sum_i (-1)^i \dim \Ext^i(\mcF_0,\mcF_1).
\]
This bilinear form carries over to a form on $\wtR_n$, where it satisfies
\begin{equation*}
(x,y) = (-1)^n(y,x\ast \wtK_{n+1,n}), \ \ x,y\in \wtR_n, \ \  (\wtK_{c,n}, \wtK_{d,n}) = \binom{n +c-d}{n}.
\end{equation*}

\begin{remark}
    Pick $k\in \Z$ and consider the functor
    \[
    \mcG_k:\Coh(\PP)\lra D^b_{\mathrm{f.d.}}(\kk),
    \]
    into the derived category of complexes of vector spaces with finite dimensional total cohomology groups,
    \[
    \mcG_k(\mcF)=R\mathrm{Hom}(\mcO_{\PP^n}(k),\mcF)=R\Gamma(\PP^n,\mcF(-k)), \ \ \mcF\in\Coh(\PP^n).
    \]
    Functor $\mcG_k$ is symmetric monoidal, due to \eqref{eq_pullback_O} and
    \[
    \begin{aligned}
R\mathrm{Hom}_{\PP^{n+m}}\left(\mathcal{O}(k), (f_{n,m})_*(\mcF \boxtimes \mcG)\right)
&\simeq R\mathrm{Hom}_{\PP^n \times \PP^m}\left(f_{n,m}^* \mcO_{\PP^{n+m}}(k), \mcF \boxtimes \mcG\right) \\
&\simeq R\mathrm{Hom}_{\PP^n}(\mathcal{O}_{\PP^n}(k), \mcF) \otimes^L R\mathrm{Hom}_{\PP^m}(\mcO_{\PP^m}(k), \mcG).
\end{aligned}
    \]
\end{remark}
On the Grothendieck rings, the induced map
\[
[\mcG_k] \ : \ \wtR \lra \Z
\]
is a homomorphism taking $\wtK_{c,n}$ to $(-1)^n \binom{c+k-1}{n}$.

\begin{example}\label{example_worp}
    Consider the quotient map
    \[
 f_n \ : \ (\PP^1)^n \lra \PP^n,
 \]
 which can also be obtained by iterating the maps $f_{i,1}$, for $i=1,\dots,n-1$.
Consider the bundle $(\mcO_{\PP^1}(1))^{\boxtimes n}$ on $(\PP^1)^n$, which is the external tensor power of the bundle $\mcO_{\PP^1}(1)$. One can show that
\begin{equation}
     (f_n)_{\ast}(\mcO_{\PP^1}(1)^{\boxtimes n}) \cong \bigoplus_{m=0}^{n-1} \left\langle \begin{matrix} n \\ m \end{matrix} \right\rangle\mcO_{\PP^n}(m + 2 - n),
     \end{equation}
where $\left\langle \begin{smallmatrix} n \\ m \end{smallmatrix}\right\rangle$ is the Eulerian number (the number of permutations in $S_n$ with exactly $m$ descents, see~\cite{Pet}).
A proof (omitted) is similar to the one for Theorem~\ref{thm_pushforward}, with the Chu-Vandermonde identity replaced by
 the Worpitzky identity~\cite{Pet}:
\begin{equation}\label{eq_worpit}
x^n = \sum_{m=0}^{n-1} \left\langle \begin{matrix} n \\ m \end{matrix} \right\rangle \binom{x+m}{n}.
\end{equation}
This translates into the following equation in $\wtR$:
\[
(\wtK_{-1,1})^n = \sum_{m=0}^{n-1}
\left\langle \begin{matrix} n \\ m \end{matrix} \right\rangle\wtK_{n-m-2,n}.
\]

\end{example}
\begin{remark} The bundles in Theorem~\ref{thm_pushforward} are closely related to the classical secant, or Schwarzenberger, bundles on symmetric powers of $\PP^1$.
Let \[ N=n+m,\qquad d=j-i. \] By the projection formula,
\[ f_{n,m*}\bigl(\mcO_{\PP^n}(i)\boxtimes \mcO_{\PP^m}(j)\bigr) \cong \mcO_{\PP^N}(i)\otimes f_{n,m*}\bigl(\mcO_{\PP^n}\boxtimes \mcO_{\PP^m}(d)\bigr).
\]
The direct-image realization of exterior powers of secant bundles (see, for example,~\cite[Theorem~3.1]{RaicuSam}) identifies the latter pushforward with
\[ f_{n,m*}\bigl(\mcO_{\PP^n}\boxtimes \mcO_{\PP^m}(d)\bigr) \cong \bigwedge^m \mathcal{E}_{N,\mcO_{\PP^1}(d+m-1)}, \]
where $\mathcal{E}_{N,\mcL}$ denotes the rank-$N$ secant bundle on $S^N(\PP^1)\cong\PP^N$ associated to a line bundle $\mcL$ on $\PP^1$. In the range \[ -m\le d\le n, \] one has
\[
\mathcal{E}_{N,\mcO_{\PP^1}(d+m-1)} \cong \mcO_{\PP^N}^{\oplus(d+m)} \oplus \mcO_{\PP^N}(-1)^{\oplus(n-d)}.
\]
Consequently,
\[
\begin{aligned} \bigwedge^m \mathcal{E}_{N,\mcO_{\PP^1}(d+m-1)} &\cong \bigoplus_{k=0}^m \binom{d+m}{k} \binom{n-d}{m-k} \mcO_{\PP^N}(k-m). \end{aligned}
\]
Tensoring with $\mcO_{\PP^N}(i)$ and substituting $d=j-i$ recovers precisely the decomposition \eqref{eq_f_decomposition} of Theorem~\ref{thm_pushforward}. \end{remark}

%
%

\section{\texorpdfstring{$\mathbb{G}_m$}{Gm}-equivariant coherent sheaves}
\label{sec_equiv_coherent}

Our goal in this section is to derive Theorem~\ref{thm_isom} in the Introduction. Let us denote $G=\mathbb{G}_m$, so that the subindex $m$ can be used elsewhere.
Consider the action of $G=\mathbb{G}_m$ on a 2-dimensional vector space by
\begin{equation}\label{eq_V_action}
V = \kk x \oplus \kk y, \qquad z \cdot x = zx, \qquad z \cdot y = z^{-1}y.
\end{equation}
Then $\ch_q(V) = q + q^{-1}.$
Identify
$Y_n = S^n(\PP^1) \cong \PP^n$
with the projective space of binary forms of degree $n$. Choose the standard equivariant linearization of $\mathcal{O}_{\mathbf{P}^n}(1)$, characterized by
\begin{equation}\label{eq_linearize}
H^0\bigl(\mathbf{P}^n, \mathcal{O}(1)\bigr) \cong \operatorname{Sym}^n(V)
\end{equation}
as $G$-representations. Its weights are $n, n-2, \ldots, -n$.
The multiplication map
\[
f_{n,m} \colon \PP^n \times \PP^m \longrightarrow \PP^{n+m}, \qquad ([P],[Q]) \longmapsto [PQ],
\]
is $G$-equivariant (in fact, it is $\mathrm{SL}_2$-equivariant), and
\begin{equation}\label{eq_eq_fnm}
f_{n,m}^* \mathcal{O}_{\PP^{n+m}}(1) \cong \mathcal{O}_{\PP^n \times \PP^m}(1,1)
\end{equation}
equivariantly.

\begin{remark}
    Let
\[
K_G(Y_n) := K_0\bigl(\operatorname{Coh}^G(Y_n)\bigr)
\]
be the Grothendieck group of $G$-equivariant coherent sheaves on $Y_n \cong \PP^n$. Since $Y_n$ is smooth, this group agrees with the Grothendieck group of $G$-equivariant vector bundles (the former is usually denoted $G_0$ and the latter $K_0$).

Let $q$ denote the class of the one-dimensional $G = \mathbb{G}_m$ representation of weight $1$. Then
\[
K_G(\operatorname{pt}) = R(G) \cong \mathbb{Z}[q, q^{-1}].
\]
We view $\mcO_{\PP^n}(1)$  as  a $\mathbb{G}_m$-equivariant bundle with the linearization determined by \eqref{eq_linearize}.
By the equivariant projective bundle theorem~\cite[Theorem 10]{Merkurjev} and Thomason~\cite{Thomason} (or~\cite[Section 5.7]{ChrissGinzburg}, over $\mathbb{C}$),
\[
K_G(\PP^n) \cong \bigoplus_{i=0}^{n} \mathbb{Z}[q, q^{-1}] \cdot [\mcO_{\PP^n}(-i)].
\]

Equivalently,
\[
[\mcO_{\PP^n}], [\mcO_{\PP^n}(-1)], \dots, [\mcO_{\PP^n}(-n)]
\]
form a basis of $K_G(\PP^n)$ over $\mathbb{Z}[q, q^{-1}]$. (Different linearizations of $\mcO(1)$ differ by tensoring with a character $q^a$. This multiplies $[\mcO(i)]$ by the unit $q^{ai}$, so the basis statement is independent of that normalization.)
\end{remark}

Applying the equivariant projective bundle theorem~\cite{Thomason} twice,
\[
K_G(\PP^n\times \PP^m) \cong K_G(\PP^n)\otimes_{\Z[q,q^{-1}]}K_G(\PP^m).
\]
 Equivariant Grothendieck group $K_G(\PP^n\times \PP^m)$ is a free $\Z[q,q^{-1}]$-module with a basis $\mcO(i,j)=\mcO_{\PP^n}(i)\boxtimes \mcO_{\PP^m}(j)$, over $0\le i\le n, 0\le j\le m$.

\begin{remark}
We use the standard notion of a $G$-linearized sheaf and equivariant line bundle as in \cite[Ch.~I, \S3, Def.~1.6]{MFK};
see also \cite[Def.~1.33 and Lem.~1.34]{Brion}.
Equivariant pullback is as in \cite[Tag 03LE, Lem.~39.12.2]{stacks-project}.
For equivariant Grothendieck groups and their functoriality we follow Thomason, as summarized in \cite[\S2.2]{Merkurjev}, and, over $\mathbb{C}$, as in Chriss-Ginzburg~\cite{ChrissGinzburg}.
\end{remark}

\begin{remark}
The Grothendieck group of the abelian category of equivariant coherent sheaves may be denoted $G_0^G(\PP^n)$,
whereas $K_0^G(\PP^n)$ denotes the Grothendieck group of equivariant vector bundles. Since $\PP^n$ is smooth, the natural map
\[
K_0^G(\PP^n) \longrightarrow G_0^G(\PP^n)
\]
is an isomorphism~\cite[Theorem 13]{Merkurjev}. We write $K_G(\PP^n)$ instead of $K^G_0(\PP^n)$, for short.
\end{remark}

With the above $\mathbb{G}_m$-action,
\[
\ch_q H^0\bigl(\PP^n, \mathcal{O}(t)\bigr) = \begin{bmatrix} n+t \\ n \end{bmatrix}, \qquad (t \geq 0).
\]
Indeed,
\[
\sum_{t \geq 0} \ch_q H^0\bigl(\PP^n, \mathcal{O}(t)\bigr) z^t = \prod_{a=0}^n \frac{1}{1 - z q^{n-2a}}.
\]

Finite, flat morphisms $f_{n,m}$ are $G$-equivariant and induce exact adjoint functors $(f_{n,m}^{\ast},(f_{n,m})_{\ast})$ between categories of $G$-equivariant coherent sheaves on $\PP^n\times \PP^m$ and $\PP^{n+m}$. (See~\cite[\S 2.2.1.]{Merkurjev} and~\cite[\S 1.5]{Thomason} for the exactness properties in the equivariant setting.)

\begin{thm}\label{thm_pushforward_equiv} Equivariant pushforward sheaf of $\mcO_{\PP^n}(i) \boxtimes \mcO_{\PP^m}(j)$ under the map $f=f_{n,m}$ splits as a direct sum of (equivariant) line bundles on $\PP^{n+m}$ if  $-m \le j - i \le n$. When this condition holds, the pushforward is given by:
\begin{equation}\label{eq_f_decomposition_equiv}
f_{\ast}(\mcO_{\PP^n}(i) \boxtimes \mcO_{\PP^m}(j)) \cong \bigoplus_{k=0}^{m} \bbinom{j-i+m}{k} \bbinom{i-j+n}{m-k}\mcO_{\PP^{n+m}}(i + k - m).
\end{equation}
\end{thm}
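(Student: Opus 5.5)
The plan is to mimic the proof of Theorem~\ref{thm_pushforward} and upgrade each step to the $G$-equivariant setting. First, the non-equivariant splitting already holds. Forgetting equivariance, $\mathcal{E}=f_{\ast}(\mcO_{\PP^n}(i)\boxtimes\mcO_{\PP^m}(j))$ is isomorphic to $\bigoplus_k C_k\,\mcO_{\PP^{n+m}}(a_k)$ with $C_k,a_k$ as in \eqref{eq_f_decomposition}. We then need an equivariant refinement: a $G$-equivariant vector bundle on $\PP^N$ whose underlying bundle is a sum of line bundles is equivariantly a sum of equivariant line bundles $\mcO(a)\otimes\chi$, where the $\chi$ are characters. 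To see this, filter $\mathcal{E}$ by twists. Let $a_{\max}$ be the largest twist. The evaluation map $\mcO(a_{\max})\otimes \operatorname{Hom}(\mcO(a_{\max}),\mathcal{E})\to\mathcal{E}$ is $G$-equivariant and split injective onto the top-twist summands. Since $G$ is reductive, we can choose an equivariant complement: the projection onto that summand is given by a $\operatorname{Hom}$ space carrying a $G$-action, and averaging, or decomposing into weight spaces, gives an equivariant splitting. Inducting downward on twists, $\mathcal{E}\cong\bigoplus_k \mcO(a_k)\otimes W_k$ for graded multiplicity spaces $W_k$, which are $G$-representations.

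It remains to identify the characters $\ch_q W_k$. Take $G$-equivariant global sections after twisting by $t\gg0$. By the projection formula, \eqref{eq_eq_fnm}, and finiteness of $f$,
\[
H^0(\PP^{n+m},\mathcal{E}(t))\cong H^0(\PP^n,\mcO(i+t))\otimes H^0(\PP^m,\mcO(j+t))
\]
as $G$-modules. Their characters are $\bbinom{t+i+n}{n}\bbinom{t+j+m}{m}$ on the left, and $\sum_k \ch_q(W_k)\bbinom{t+a_k+n+m}{n+m}$ on the right. Set $x=t+i+n$ and $d=j-i+m-n$ as before. Then the statement reduces to the $q$-Chu--Vandermonde identity
\[
\bbinom{x}{n}\bbinom{x+d}{m}=\sum_{k=0}^m \bbinom{n+d}{k}\bbinom{m-d}{m-k}\bbinom{x+k}{n+m},
\]
together with uniqueness of the coefficients. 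Uniqueness holds because the functions $x\mapsto\bbinom{x+k}{n+m}$, $0\le k\le m$, are $\Z[q,q^{-1}]$-linearly independent as functions of the integer $x$ for $x$ large. To see this, take differences in $x$, or note that they are $q$-analogues of a basis of polynomials. So a character identity valid for all large $t$ determines the $\ch_q W_k$.

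The main obstacle is proving the $q$-identity, in balanced form, for all $-n\le d\le m$. The plan is to verify it for all $x\ge n$ by a $q$-counting argument: expand $\bbinom{x}{n}\bbinom{x+d}{m}$ using $q$-Vandermonde for $\bbinom{x+d}{m}$ in terms of $\bbinom{x+k-n}{m-n+\dots}$ and then collect terms. Alternatively, induct on $m$ using the $q$-Pascal rule. Balanced normalizations introduce powers of $q$ that must cancel by symmetry ($q\mapsto q^{-1}$ invariance of both sides), so one can first prove the unbalanced version and then check that the weight exponents match. One could also bypass this identity by choosing an $\mathrm{SL}_2$-equivariant argument via the secant-bundle description. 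Finally, note that the normalization of linearizations in \eqref{eq_eq_fnm} fixes $a_k=i+k-m$ with no extra character shift, as the leading terms confirm.
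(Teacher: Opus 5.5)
Your proposal is correct and follows the paper's proof. Both arguments run through the same three steps: the nonequivariant Horrocks splitting, an equivariant refinement using linear reductivity of $\mathbb{G}_m$, and identification of the multiplicity characters from $\ch_q H^0(\mathcal{E}(t))$ via the balanced $q$-Chu--Vandermonde identity. The paper also just quotes that identity without proof. The only differences are minor: you split off the top-twist summands with the evaluation map where the paper uses the Harder--Narasimhan filtration and $\operatorname{Ext}^1$-vanishing, and you use linear independence of $x\mapsto\left[\begin{smallmatrix} x+k\\ n+m\end{smallmatrix}\right]$ for large $t$ where the paper determines the characters by increasing $t$ step by step from $-\min(i,j)$, which is the same triangularity argument.
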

Our earlier notations are used here, with square brackets denoting balanced quantum binomials in \eqref{eq_balanced}. For a sheaf $\mcL$ and a Laurent polynomial $f(q)=\sum_{i\in \Z} a_i q^i\in \Z_+[q,q^{-1}]$, we write $f(q)\mcL$ to denote the direct sum $\oplus_i \mcL^{\oplus a_i} \langle i\rangle$.

\begin{proof}
The case $n=0$ or $m=0$ is trivial, so assume $n,m>0$ and put $N=n+m$.
Sheaf
\[
\mathcal{E}=f_*\bigl(\mcO_{\PP^n}(i)\boxtimes\mcO_{\PP^m}(j)\bigr).
\]
is a $G$-equivariant vector bundle,
since $f$ is finite and flat.

By the projection formula~\cite[Tag~01E6, Lemma~20.54.2]{stacks-project}, which is compatible with the $G$-linearizations, and the equivariant isomorphism \eqref{eq_eq_fnm}, we have
\[
\mathcal{E}(t)
\cong
f_{\ast}\bigl(\mcO(i+t)\boxtimes\mcO(j+t)\bigr)
\]
as $G$-equivariant sheaves.  Since $f$ is finite, its higher direct images vanish.  Thus, for every $t\in\mathbb Z$, there is an isomorphism of $G$-representations
\begin{equation*}
H^r\bigl(\PP^N,\mathcal E(t)\bigr)
\cong H^r\bigl(\PP^n\times\PP^m,
\mcO(i+t)\boxtimes\mcO(j+t)\bigr)\
\cong \bigoplus_{p+s=r}
H^p\bigl(\PP^n,\mcO(i+t)\bigr)\otimes
H^s\bigl(\PP^m,\mcO(j+t)\bigr),
\end{equation*}
where the second isomorphism is the equivariant K\"unneth isomorphism.

As in the non-equivariant proof (of Theorem~\ref{thm_pushforward} above), a line bundle on $\PP^n$ can have cohomology only in degree $0$ or $n$. Thus, the only possible nonzero intermediate cohomology occurs when one factor contributes top cohomology and the other contributes $H^0$.  The two possibilities are
\[
i+t\le -n-1,\ \  j+t\ge 0 \ \leftrightarrow \ j-i\ge n+1,
\]
and
\[
j+t\le -m-1,\ \  i+t\ge 0, \ \leftrightarrow \ i-j\ge m+1.
\]
Hence, if $-m\le j-i\le n$,
then
\[
H^r\bigl(\PP^N,\mathcal E(t)\bigr)=0,
\qquad 0<r<N,
\]
for every $t\in\mathbb Z$.  Horrocks' criterion therefore implies that the underlying vector bundle of $\mathcal E$ splits as a direct sum of line bundles.

We next note that this splitting may be chosen $G$-equivariantly. Write the distinct degrees occurring in the nonequivariant splitting in decreasing order.
The filtration obtained by successively collecting the summands of largest degree is the Harder--Narasimhan filtration of $\mathcal{E}$
with respect to $\mcO_{\PP^N}(1)$;
see \cite[Theorem~1.6.7 and Definition--Corollary~1.6.9]{HuLe}. In particular, this filtration is intrinsic.
Since the $G$-action preserves
$\mcO_{\PP^N}(1)$, uniqueness of the Harder--Narasimhan filtration
implies that this filtration is $G$-invariant.

Its successive quotients have underlying bundles of the form
$Q_a=\mcO_{\PP^N}(a)^{\oplus e_a}$.  Then
$Q_a\otimes\mcO_{\PP^N}(-a)$ is a $G$-equivariant vector bundle whose
underlying bundle is trivial.  Since
$H^0(\PP^N,\mcO_{\PP^N})=\Bbbk$, a $G$-linearization of a trivial
bundle is determined by a finite-dimensional representation of $G$.
Consequently,
\begin{equation}\label{eq_Qa}
Q_a\cong \mcO_{\PP^N}(a)\otimes M_a
\end{equation}
for some finite-dimensional $G$-representation $M_a$.
Since $N\ge 2$,
\[
\operatorname{Ext}^1_{\PP^N}\bigl(\mcO(a),\mcO(b)\bigr)
\cong H^1\bigl(\PP^N,\mcO(b-a)\bigr)=0
\]
for all $a,b$, and
\[
\operatorname{Ext}^1_{\PP^N}
\bigl(\mcO(a)\otimes M_a,\mcO(b)\otimes M_b\bigr)=0
\]
for all finite-dimensional $G$-representations $(M_a,M_b)$.
We claim that the successive extensions in the Harder--Narasimhan filtration split $G$-equivariantly.  Indeed, consider a $G$-equivariant short exact sequence
\[
0\lra A\lra B\stackrel{p}{\lra} Q_a\longrightarrow 0
\]
arising from this filtration, and suppose inductively that $A$ is a direct sum of bundles of the form
$\mcO(b)\otimes M_b$.
The vanishing above gives $\operatorname{Ext}^1_{\PP^N}(Q_a,A)=0$, and hence the natural map
\[
\operatorname{Hom}_{\PP^N}(Q_a,B)\longrightarrow
\operatorname{End}_{\PP^N}(Q_a)
\]
is surjective.  This is a homomorphism of $G$-representations.  Since
$G=\mathbb G_m$ is linearly reductive, taking $G$-invariants is exact, so the induced map
\[
\operatorname{Hom}_{\PP^N}(Q_a,B)^G\longrightarrow
\operatorname{End}_{\PP^N}(Q_a)^G
\]
is also surjective.  The identity map $\id_{Q_a}$ is $G$-invariant, and therefore lifts to a $G$-equivariant section $s:Q_a\lra B$ of $p$.  Thus the sequence splits $G$-equivariantly.  Applying this inductively to the Harder--Narasimhan filtration gives
\begin{equation}\label{eq_E_decomp}
\mathcal E\cong
\bigoplus_a \mcO_{\PP^N}(a)\otimes M_a
\end{equation}
as a $G$-equivariant vector bundle.

It remains to determine the characters of the multiplicity representations $M_a$.  For
$t\ge\max(-i,-j)=-\min(i,j)$, the chosen symmetric linearizations give
\begin{align*}
\ch_q H^0\bigl(\PP^N,\mathcal E(t)\bigr)
&=\ch_q H^0\bigl(\PP^n\times\PP^m,
\mcO(i+t)\boxtimes\mcO(j+t)\bigr)\\
&=\bbinom{t+i+n}{n}\bbinom{t+j+m}{m}.
\end{align*}
Set
\[
x=t+i+n,\qquad d=j-i+m-n.
\]
The splitting condition $-m\le j-i\le n$ is equivalent to $-n\le d\le m$.  The balanced quantum Chu--Vandermonde identity, in the form parallel to \eqref{eq_CVa}, is
\[
\bbinom{x}{n}\bbinom{x+d}{m}
=
\sum_{k=0}^{m}
\bbinom{n+d}{k}
\bbinom{m-d}{m-k}
\bbinom{x+k}{n+m}.
\]
Substituting  $x$ and $d$ above gives
\begin{equation}
\ch_q H^0\bigl(\PP^N,\mathcal E(t)\bigr)
=
\sum_{k=0}^{m}
\bbinom{j-i+m}{k}
\bbinom{i-j+n}{m-k}
\cdot
\bbinom{t+(i+k-m)+N}{N}.
\end{equation}
The last quantum binomial is the character of
$H^0\bigl(\PP^N,\mcO(i+k-m+t)\bigr)$ whenever the twist is nonnegative, and it is zero when the twist lies between $-N$ and $-1$.  For every $k$ for which the coefficient in the preceding sum is nonzero and every $t\ge -\min(i,j)$, these are the only two possibilities.  Thus the right-hand side is the character of the global sections of the equivariantly split bundle
\[
\mathcal E'
:=
\bigoplus_{k=0}^{m}
\bbinom{j-i+m}{k}
\bbinom{i-j+n}{m-k}
\mcO_{\PP^N}(i+k-m).
\]
Consequently,
\[
\ch_q H^0\bigl(\PP^N,\mathcal E(t)\bigr)
=
\ch_q H^0\bigl(\PP^N,\mathcal E'(t)\bigr)
\qquad
\text{for all }t\ge - \min(i,j).
\]
Finally, the character-valued Hilbert function determines an equivariantly split bundle in this situation.  Indeed, from the K\"unneth description above,
\[
H^0(\PP^N,\mathcal E(t))=0
\quad\text{for }t<-\min(i,j),
\]
and this space is nonzero for $t=-\min(i,j)$.  On the other hand, from the equivariant splitting in \eqref{eq_E_decomp}
we obtain
\[
H^0\bigl(\PP^N,\mathcal E(t)\bigr)
\cong
\bigoplus_a
H^0\bigl(\PP^N,\mcO(a+t)\bigr)\otimes M_a.
\]
Since $H^0(\PP^N,\mcO(d))=0$ precisely when $d<0$, if $a_{\max}$ denotes the largest twist occurring in the splitting, then
\[
H^0(\PP^N,\mathcal E(t))=0
\quad\text{for }t<-a_{\max},
\qquad
H^0(\PP^N,\mathcal E(-a_{\max}))\ne 0.
\]
Hence $a_{\max}=\min(i,j)$.

At $t=-\min(i,j)$ only the summands of this largest twist contribute to $H^0$, so their multiplicity character is determined.  Increasing $t$ by one determines, successively, the multiplicity character in each lower twist.  The same argument applies to $\mathcal E'$, and the equality of characters above shows that all multiplicity characters agree.  Since finite-dimensional $\mathbb G_m$-representations are determined by their characters, $\mathcal E\cong\mathcal E'$ equivariantly and decomposition \eqref{eq_f_decomposition_equiv} follows.
\end{proof}

Note also that relation \eqref{eq_q_binom} matches the corresponding exact sequences involving equivariant bundles $\mcO_{\PP^n}(-c-k)$.
The following corollary is stated as Theorem~\ref{thm_isom} in the introduction.
\begin{cor}\label{cor_isom}
    There is a natural isomorphism of graded $\Z[q,q^{-1}]$-algebras
    \begin{equation}\label{eq_thm_isom_n}
    R\ \cong \oplusop{n\ge 0}\, K_{\mathbb{G}_m}(\PP^n)
    \end{equation}
    that takes $K_{c,n}$ to the symbol of the equivariant line bundle $\mcO(-c)$ on $\PP^n$.
\end{cor}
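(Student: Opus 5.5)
We define a $\Z[q,q^{-1}]$-algebra homomorphism $\Phi: R\to \oplus_{n\ge 0} K_G(\PP^n)$ on generators by $\Phi(K_{c,n})=[\mcO_{\PP^n}(-c)]$. The target carries the product $[\mcF_1]\cdot[\mcF_2]=[(f_{n,m})_\ast(\mcF_1\boxtimes\mcF_2)]$. This product is well defined because $\boxtimes$ is biexact and $(f_{n,m})_\ast$ is exact. It is commutative and associative because $f_{n,m}=f_{m,n}\circ\tau$ and by the associativity identity for the addition maps. The unit is the class of the structure sheaf of $\PP^0$, and the product is $\Z[q,q^{-1}]$-bilinear since everything is $G$-equivariant. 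Since $R$ is presented as a commutative algebra, $\Phi$ is well defined once we check that the images satisfy relations \eqref{eq_q_binom} and \eqref{eq_multiplication}. It is graded by construction.

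For \eqref{eq_q_binom}, we use the equivariant Koszul complex on $\PP^n$. The Euler sequence gives an exact complex
\[
0\to \mcO(-c-n-1)\otimes\Lambda^{n+1}W\to\cdots\to\mcO(-c-1)\otimes W\to\mcO(-c)\to 0,
\]
with $W=H^0(\PP^n,\mcO(1))\cong \operatorname{Sym}^n V$. Its character is $[n+1]$, and $\ch_q\Lambda^kW=\bbinom{n+1}{k}$. One checks the weights: the characters $q^{n},q^{n-2},\dots,q^{-n}$ are symmetric, so $\ch_q\Lambda^k W$ is exactly the balanced quantum binomial, using $\prod_{a}(1+zq^{n-2a})$. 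The alternating sum in $K_G(\PP^n)$ gives $\sum_k(-1)^k\bbinom{n+1}{k}[\mcO(-c-k)]=0$, which is \eqref{eq_q_binom}. One should make sure the maps in the Koszul complex are equivariant. With the chosen linearization the evaluation map $W\otimes\mcO(-1)\to\mcO$ is equivariant, possibly after replacing $W$ by its dual; that is harmless, since the character is symmetric under $q\mapsto q^{-1}$.

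For \eqref{eq_multiplication}, with $0\le c\le n$ and $0\le b\le m$, we apply Theorem~\ref{thm_pushforward_equiv} with $i=-c$ and $j=-b$. Then $j-i=c-b$ lies in $[-m,n]$, so the decomposition \eqref{eq_f_decomposition_equiv} holds. It expresses $\Phi(K_{c,n})\Phi(K_{b,m})$ as $\sum_k \bbinom{c-b+m}{k}\bbinom{b-c+n}{m-k}[\mcO(-c+k-m)]$. Reindexing by $k'=c+m-k$ turns this into
\[
\sum_{k'}\bbinom{m-b+c}{c+m-k'}\bbinom{n-c+b}{k'-c}[\mcO(-k')],
\]
and the symmetry $\bbinom{a}{r}=\bbinom{a}{a-r}$ converts the first factor to $\bbinom{m-b+c}{k'-b}$. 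This matches \eqref{eq_multiplication} exactly, and it is the step where index bookkeeping must be checked carefully.

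For bijectivity, $R$ is spanned over $\Z[q,q^{-1}]$ by $K_{c,n}$ with $0\le c\le n$, as shown before Theorem~\ref{thm_R_free_mod} using \eqref{eq_q_binom} alone. These are sent to the $\Z[q,q^{-1}]$-basis $[\mcO(-c)]$, $0\le c\le n$, of $K_G(\PP^n)$ given by the equivariant projective bundle theorem. A spanning set mapped bijectively onto a basis forces the map to be injective, and surjectivity is clear. Hence $\Phi$ is an isomorphism, and as a byproduct Theorem~\ref{thm_R_free_mod} follows. We expect the main obstacle to be verifying that the linearizations and sign conventions make the Koszul and pushforward characters come out as balanced, not shifted, quantum binomials. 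Theorem~\ref{thm_pushforward_equiv} already handles this for the product, so the remaining care lies in the Koszul step.
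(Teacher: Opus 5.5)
Your proposal is correct and follows the paper's own route: \eqref{eq_multiplication} is read off from Theorem~\ref{thm_pushforward_equiv} with $i=-c$, $j=-b$ (your reindexing $k'=c+m-k$ supplies the bookkeeping the paper leaves to the reader), \eqref{eq_q_binom} comes from the equivariant Koszul complexes, and bijectivity follows because the spanning set $\{K_{\mbs}\}_{\mbs\in\mcS}$ maps onto the equivariant projective-bundle basis. One small correction: spanning $R$ by the $K_{\mbs}$ uses \eqref{eq_multiplication} and the normalization $K_{c,0}=1$ as well as \eqref{eq_q_binom}, not \eqref{eq_q_binom} alone, so you should also record that $\Phi(K_{c,0})=[\mcO_{\PP^0}]$ is the unit (true since $\mathrm{Sym}^0V$ is the trivial representation).
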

Defining relations \eqref{eq_multiplication} in $R$ are categorified by the direct sum decompositions~\eqref{eq_f_decomposition_equiv}.

The $q$-semilinear Euler characteristic form
\begin{equation}\label{eq_Euler_form}
K_G(\PP^n) \times K_G(\PP^n) \lra \Z[q,q^{-1}], \ \
([\mcL_1],[\mcL_2]) = \sum_r (-1)^r \ch_q(\Ext^r(\mcL_1,\mcL_2)),
\end{equation}
is given on the line bundles with the standard equivariant structure by
\[
([\mcO(-c)],[\mcO(-d)]) = \bbinom{n+c-d}{n}, \ \ c,d\in \Z,
\]
where one extends the balanced quantum binomials to arbitrary integral upper entries by
\[
\bbinom{a}{n}=\frac{[a][a-1]\dots [a-n+1]}{[n]!}.
\]
This induces a $q$-semilinear form on $R$, with
\[
(K_{c,n},K_{d,m}) = \delta_{n,m}\bbinom{n+c-d}{n}, \ \ c,d\in \Z.
\]
Under this semilinear form, the multiplication on $R$, induced, via the isomorphism \eqref{eq_thm_isom_n}, by the maps $[(f_{n,m})_{\ast}]$, over all $n,m$, is adjoint to the comultiplication, induced by the maps $[(f_{n,m})^{\ast}]$, over all $n,m$:
\[
(z,xy )_{n+m}=(\Delta(z),x\otimes y)_{n,m},
\]
and
\[
\Delta(K_{c,n})=
\sum_{k=0}^n K_{c,k}\otimes K_{c,n-k}.
\]

In the formulas \eqref{eq_K_and_inverse} the RHS expressions correspond to the elements $K_{1,1}-q^{\pm 1} K_{0,1}\in R_1$. Up to the minus sign, these match the images of skyscraper sheaves at $0$ and $\infty$, twisted by the one-dimensional representations $\langle 1\rangle$ and $\langle -1\rangle$ of $\mathbb{G}_m$, in the Grothendieck group. These two fixed point classes diagonalize the equivariant Euler form \eqref{eq_Euler_form}, after localization~\cite[\S 2.3]{Okounkov}.

\begin{example} Example~\ref{example_worp} has a $q$-analogue (or a  $\mathbb G_m$-equivariant analogue).  Set
\[
\left\langle \begin{matrix} n \\ m \end{matrix}\right\rangle_q
:=
\sum_{\substack{\sigma\in S_n\\ \mathrm{des}(\sigma)=m}}
q^{\,2\mathrm{maj}(\sigma)-nm}
\]
to be the balanced $q$-Eulerian number.  Equivalently, if $A_{n,m}(Q)$ denotes
Carlitz's $q$-Eulerian number~\cite{Carlitz}, then
\[
\left\langle \begin{matrix} n \\ m \end{matrix}\right\rangle_q
=q^{-nm}A_{n,m}(q^2).
\]
In these balanced conventions, Carlitz's $q$-Worpitzky identity takes the form
\begin{equation}\label{eq_worpit_q}
[x]^n
=
\sum_{m=0}^{n-1}
\left\langle \begin{matrix} n \\ m \end{matrix}\right\rangle_q
\bbinom{x+m}{n};
\end{equation}
see~\cite{Carlitz} (also~\cite{Carlitz2} and \cite[Chapter 6.5]{Pet}).  On the coherent sheaves side, the corresponding formula is an isomorphism
\[
(f_n)_*\bigl(\mcO_{\PP^1}(1)^{\boxtimes n}\bigr)
\cong
\bigoplus_{m=0}^{n-1}
\left\langle \begin{matrix} n \\ m \end{matrix}\right\rangle_q
\mcO_{\PP^n}(m+2-n)
\]
of $\mathbb G_m$-equivariant bundles, where a Laurent polynomial coefficient
is interpreted as the corresponding sum of character twists.  In particular,
this specializes at $q=1$ to the ordinary Eulerian decomposition in Example~\ref{example_worp}. In the ring $R$, we get
\[
(K_{-1,1})^n = \sum_{m=0}^{n-1}\left\langle \begin{matrix} n \\ m \end{matrix}\right\rangle_q K_{n-m-2,n}.
\]
\end{example}

\begin{remark} \label{rmk_sl2}
Consider the involution $\tau$ of $\Z[q,q^{-1}]$ given by  $\tau(q)=q^{-1}$ and denote by $\Z^{\tau}:=\Z[q,q^{-1}]^{\tau}$ the $\tau$-invariant subring of $\Z[q,q^{-1}]$. We have $\Z^{\tau}= \Z[q+q^{-1}]\subset \Z[q,q^{-1}]$.
Extend involution $\tau$ to $R$ by $\tau(K_{c,n})=K_{c,n}$ for all $n,c$. Defining relations \eqref{eq_q_binom} and \eqref{eq_multiplication} are invariant under $\tau$, since balanced quantum binomials are $\tau$-invariant. Coefficients in these relations lie in $\Z^{\tau}$.
The subring $R^{\tau}$ of $\tau$-invariant elements of $R$ is a free $\Z^{\tau}$-module with a basis of elements $K_{\mbs}$, over all $\mbs\in\mcS$.

\vspace{0.07in}

Suppose that the ground field $\kk$ has characteristic zero.
There is an $\mathrm{SL}_2$-equivariant version of Theorem~\ref{thm_isom}.
Let
\[
T= \left\{ \begin{pmatrix} z&0\\ 0&z^{-1} \end{pmatrix} :z\in\mathbb G_m \right\} \subset \mathrm{SL}_2 \]
be the diagonal maximal torus. Identifying $R(T)\cong\Z[q,q^{-1}]$, the nontrivial element of the Weyl group acts by $q\mapsto q^{-1}$.
Restriction of representations to $T$ therefore identifies
$ R(\mathrm{SL}_2) \cong R(T)^W \cong \Z^\tau. $
Thus $R^\tau$ may naturally be viewed as an $R(\mathrm{SL}_2)$-algebra.
The natural action of $\mathrm{SL}_2$ on $V\cong\kk^2$ induces actions on $ \PP^n\cong \PP(\operatorname{Sym}^n V)$,
and the multiplication maps $ f_{n,m}$ are $\mathrm{SL}_2$-equivariant. The equivariant projective bundle theorem gives
\[
K_{\mathrm{SL}_2}(\PP^n) \cong \bigoplus_{i=0}^n \, R(\mathrm{SL}_2)[\mcO_{\PP^n}(-i)] \cong \bigoplus_{i=0}^n \, \Z^{\tau}[\mcO_{\PP^n}(-i)].
\]
The proof of Theorem~\ref{thm_pushforward_equiv} carries over to $\mathrm{SL}_2$ and  the equivariant splitting argument applies. Indeed, the Harder--Narasimhan filtration is $\mathrm{SL}_2$-invariant by uniqueness, and, in characteristic zero, $\mathrm{SL}_2$
is linearly reductive, so taking invariants is exact and the successive extensions split equivariantly.
The resulting multiplicity representations restrict to $T$-representations whose characters are exactly the coefficients in \eqref{eq_f_decomposition_equiv}.
In particular, these coefficients are $\tau$-invariant and belong to $\Z^{\tau}\cong R(\mathrm{SL}_2).$
Consequently, there is a natural isomorphism of graded $\Z^\tau$-algebras
\begin{equation} \label{eq_iso_SL2}
R^\tau \ \cong\ \bigoplus_{n\geq0}K_{\mathrm{SL}_2}(\PP^n), \qquad K_{c,n}\longmapsto[\mcO_{\PP^n}(-c)],
\end{equation}
where the multiplication on the right-hand side is induced by the $\mathrm{SL}_2$-equivariant pushforward functors $(f_{n,m})_*$. \end{remark}

%
%

\section{Proof of Proposition~\ref{prop_binom}}
\label{subsec_prop_binom}

 Let $E$ be the shift operator acting on functions of $c$, defined by $(E f)(c) = f(c+1)$. It $q$-commutes with the invertible scaling operator $(Sf)(c)=q^c f(c)$, so that $ES=qSE$. Let us write $F_{c,n}=\bmat{K}{c}{n}$. Note that $E (F_{c,n})=F_{c+1,n}$, viewing $E$ as acting on rational functions in $q^c$ with coefficients in $\Q(q,K)$.  We can rewrite the summation on the LHS of \eqref{eq_q_binom_2} as a polynomial operator in $E$ acting on $F_{c,n}$:
 $$\left( \sum_{k=0}^{n+1} (-1)^k \bbinom{n+1}{k} E^k \right) F_{c,n}.$$
 Via the $q$-binomial theorem (for balanced $q$-binomials), this operator factors as the product
 \begin{equation}\label{eq_E}
  \prod_{i=0}^n (1 - q^{n-2i} E).
 \end{equation}
 Furthermore, write $F_{c,1}$ via the operator $S$ as follows
 \[
 F_{c,1}= \left( \frac{K S - K^{-1} S^{-1}}{q - q^{-1}} \right) (1),
 \]
 and, likewise, let
 \[\mathbf{F}_n := \frac{1}{[n]!} \prod_{m=0}^{n-1} \frac{K q^{-m} S - K^{-1} q^m S^{-1}}{q - q^{-1}}, \ \ F_{c,n}= \mathbf{F}_n(1).
 \]

 Since $S$ commutes with the coefficients $q^{\pm 1},K^{\pm 1}$, we can expand the product for $\mathbf{F}_n$ into a linear combination of the operators $S^{n-2j}$ for $j=0, 1, \dots, n$:$$\mathbf{F}_n = \sum_{j=0}^n C_j S^{n-2j},$$
 with coefficients $C_j \in \mathbb{Q}(q,K)$.
 Evaluating this operator on the constant function $1$ expresses $F_{c,n}$ as a linear combination of the eigenfunctions $S^{n-2j}(1) = q^{(n-2j)c}$.
 Note that each function $S^m(1)(c)=q^{mc}$ is an eigenfunction of $E$ with eigenvalue $q^m$:
 $$E(S^{m}(1)) = q^{m} S^{m}(1).$$
 Now apply the factored operator \eqref{eq_E} to an arbitrary  component $S^{n-2j}(1)$ of $F_{c,n}$:
 $$\left( \prod_{i=0}^n (1 - q^{n-2i} E) \right) S^{n-2j}(1) = \left( \prod_{i=0}^n (1 - q^{n-2i} q^{n-2j}) \right) S^{n-2j}(1)=0.$$
 For any $j \in \{0, 1, \dots, n\}$, the factor  at $i = n-j$ yields $0$.
 Since the operator \eqref{eq_E} annihilates every individual basis component $S^{n-2j}(1)$ in the expansion of $\mathbf{F}_n(1)$, Proposition~\ref{prop_binom} follows.  $\square$

\printbibliography

\vspace{2em} \noindent
Department of Mathematics, Johns Hopkins University, Baltimore, MD 21218, USA\\
\textit{E-mail address:} \href{mailto:khovanov@jhu.edu}{khovanov@jhu.edu}

\end{document}